\theoremstyle{thmstyleone}%
\newtheorem{theorem}{Theorem}%  meant for continuous numbers
\theoremstyle{thmstyletwo}%
\newtheorem{remark}{Remark}%
\theoremstyle{thmstylethree}%
\newtheorem{definition}{Definition}%
\begin{document}

\title[C.R.D.I.C.S]{Riemannian curvature of gaussian distribution  in dual  coordinate system.}

%%=============================================================%%
%% Prefix   -> \pfx{Dr}
%% GivenName    -> \fnm{Joergen W.}
%% Particle -> \spfx{van der} -> surname prefix
%% FamilyName   -> \sur{Ploeg}
%% Suffix   -> \sfx{IV}
%% NatureName   -> \tanm{Poet Laureate} -> Title after name
%% Degrees  -> \dgr{MSc, PhD}
%% \author*[1,2]{\pfx{Dr} \fnm{Joergen W.} \spfx{van der} \sur{Ploeg} \sfx{IV} \tanm{Poet Laureate}
%%                 \dgr{MSc, PhD}}\email{iauthor@gmail.com}
%%=============================================================%%

\author*[1]{\fnm{} \sur{Prosper Rosaire Mama  Assandje }}\email{mamarosaire@facsciences-uy1.cm}\equalcont{These authors
contributed equally to this work.}

\author[2]{\fnm{} \sur{Joseph Dongho }}\email{josephdongho@yahoo.fr}
\equalcont{These authors contributed equally to this work.}

\author[3]{\fnm{} \sur{ Thomas Bouetou Bouetou}}\email{tbouetou@gmail.com}
\equalcont{These authors contributed equally to this work.}

  \affil*[1]{\orgdiv{Department of Mathematics}, \orgname{University of Yaounde 1},
\orgaddress{\street{usrectorat@.univ-yaounde1.cm}, \city{yaounde},
\postcode{337}, \state{Center},
\country{Cameroon}}}

\affil[2]{\orgdiv{Department of Mathematics and Computer Science},
\orgname{University of Maroua},
\orgaddress{\street{decanat@fs.univ-maroua.cm}, \city{Maroua},
\postcode{814}, \state{Far -North}, \country{Cameroon}}}

\affil[3]{\orgdiv{Computer Engineering Department.}, \orgname{Higher
national school of
Polytechnic},\orgaddress{\street{thomas.bouetou@polytechnique.cm},
\city{Yaounde I}, \postcode{8390}, \state{Center},
\country{Cameroon}}}

%%==================================%%
%% sample for unstructured abstract %%
%%==================================%%

\abstract{This paper explore the differential geometric structure of
statistical manifolds defined by Gaussian distributions,  focusing
on their representation  in a dual coordinate system. Using Amari's
information geometry framework, we derive explicit expressions for
the Fisher information metric and compute the associated torsion and
curvature tensors in the  dual setting. Our results show that,
unlike the standard coordinate system where curvature and torsion
vanish, the dual system exhibits non zero values for these  three
geometric quantities. This highlights how dual parameterizations can
affect the intrinsic geometry of statistical models.}

\keywords{Tensor curvature, Riemannian metric, Torsion.}

%%\pacs[JEL Classification]{D8, H51}

%%\pacs[MSC Classification]{35A01, 65L10, 65L12, 65L20, 65L70}

\maketitle

\section{Introduction}\label{sec1}

Information geometry is a powerful mathematical framework that
applies the methods of differential geometry to the study of
statistical models and probability distributions. By treating
families of probability distributions as points on a manifold, this
field provides a geometric interpretation of key statistical
concepts such as divergence, estimation, and entropy. Among its
central tools are the Fisher information metric and dual affine
connections, which allow one to study geometric invariants like
curvature and torsion within a statistical setting.

The foundational work of C.R. Rao \cite{Shun}, first demonstrated
the relevance of differential geometry in statistics by introducing
a Riemannian metric defined through the Fisher information matrix,
enabling the computation of geodesic distances between
distributions. Building on this insight, S. Amari and H. Nagaoka
\cite{Shun}, developed a comprehensive theory of information
geometry, formalizing the notions of dual coordinate systems and
affine connections on statistical manifolds. Their theory shows that
many statistical models, including the Gaussian family, exhibit
flatness and torsion free properties in their natural (exponential
or mixture) coordinate systems. However, the behavior of geometric
properties under dual coordinate transformations remains a subtle
and open question. While it is well known that the Gaussian manifold
is flat and torsion-free in its natural coordinate system $\theta$,
it is not always clear whether such nonlinear geometric invariants
are preserved when one moves to a dual coordinate system, such as
$\xi = (\mu, \mu^2 + \sigma^2)$? This leads to the central
problematic of this work: To what extent does the dual coordinate
transformation affect the intrinsic nonlinear geometry (curvature
and torsion) of a Gaussian statistical manifold? Are these geometric
invariants preserved, distorted, or enriched in the dual setting?
Several researchers have addressed related questions. M.N. Boyom
\cite{michel,24} has explored the geometric and numerical properties
of Koszul connections in statistical settings, while J.-L. Koszul,
and collaborators \cite{kz1}, have studied deformations of locally
flat connections. In more recent works, in \cite{mama1},
investigated the geometric structure of beta distributions within
the framework of information geometry. Ovidiu Calin \cite{ovidiu},
further developed the differential geometric modeling of the
Gaussian distribution, providing explicit expressions for curvature
and connection coefficients in natural coordinates. In addition,
J.M. Lee \cite{rie}, offered a comprehensive exposition of
Riemannian manifolds and curvature tensors that underpins the
theoretical foundation of our investigation. It is also worth noting
that recent developments have shown the space of vector fields
$T^l(\theta)$ on a Gaussian manifold can be identified with a
submodule of second-degree polynomials with smooth coefficients
scaled by $\sigma^{-3}$, i.e., $\sigma^{-3}C^\infty(S)[x]^2$,
offering new algebraic insights into the tangent bundle structure.
In this work, we extend these investigations by focusing on the
Riemannian and affine geometric structures of the Gaussian
statistical manifold when expressed in the dual coordinate system
$\xi = (\mu, \mu^2 + \sigma^2)$. We derive explicit formulas for the
Fisher information metric, the Christoffel symbols, the torsion
tensor, and the Riemann curvature tensor in this dual system. We
prove that the curvature in the $\xi$-coordinate system is equal to
\begin{eqnarray*}
% \nonumber to remove numbering (before each equation)
  \mathcal{K} &=& \frac{16\mu^{2}\sigma^{8} + 12\mu^{4} + 6\sigma^{10} - \mu\sigma^{8} - 2 \mu^{2}\sigma^{2} +10\mu^{3}\sigma^{6} +92 \mu^{3}\sigma^{8} +48\mu^{5}\sigma^{6}}{4\sigma^{10}}\\
&& + \frac{12\mu\sigma^{10}+24\mu^{5}-6\mu^{3}}{4\sigma^{10}}
\end{eqnarray*} A key outcome of our analysis is the emergence of a non zero scalar
curvature, which contrasts with the flatness observed in the natural
coordinates. This result suggests that dual parametrization may
introduce richer nonlinear geometric features, with implications for
the stability and global structure of statistical models. The
structure of  the paper is as follows: Section 2 reviews the basic
theoretical framework of statistical manifolds and introduces the
Gaussian model. Section 3 presents the derivation of the Riemannian
metric in the dual $\xi$-coordinate system. Section 4 is devoted to
computing the torsion and curvature tensors in this setting. We
conclude with a discussion of the implications of our findings and
potential directions for future work, including generalizations to
other distributions and geometric invariants.

\section{Preliminaries}\label{sec2}
 Soit
 $S = \left\{p_{ \theta}(x),\left.
                        \begin{array}{ll}
                         \theta\in \Theta & \hbox{} \\
                          x\in \mathcal{X} & \hbox{}
                        \end{array}
                      \right.
\right\}$ be the set of probabilities $p_{ \theta}$, parameterized
by $ \Theta$, open subset of $\mathbb{R}^{n}$; on the sample space
$\mathcal{X}\subseteq\mathbb{R}$, with  $\theta =
\left(\theta_{1},\theta_{2},\dots, \theta_{n} \right)$. In
\cite{mama}, the Gaussian model is a family of probabilistic
distributions. consisting of density functions that are defined as
follows $S = \left\{p_{ \theta}(x)=\frac{1}{\sqrt{2\pi}\sigma
}e^{-\frac{( x-\mu)^{2}}{2\sigma^{2}}},\left.
                        \begin{array}{ll}
                         \theta\in \mathbb{R}\times \mathbb{R}^{*}_{+} & \hbox{} \\
                          x\in \mathbb{R}^{*}_{+} & \hbox{}
                        \end{array}
                      \right.
\right\}$ with $p_{ \theta}(x)$ the gaussian distribution
parameterized by $\theta= (\mu , \sigma )$.

\subsection{ $\theta$-coordinates System and  $\xi$-coordinates system..} Let $S$, the statistical
manifold
\begin{definition}\cite{Shun}
Let $U$ be an open subset of $S$ and $\phi$ be a homeomorphism of
$U$ in $\mathbb{R}^{n}$. Then a point $p\in U$ is a function de
$\theta = \left( \theta_{1}, \theta_{2}, \dots, \theta_{n}\right)
\in \mathbb{R}^{n}$ by $\phi$, i.e. $\phi(p) = \theta$ this
application $\phi$ is called a function coordinates in the
neighbourhood of $p$. It is introduced a coordinate system in $U$
called the Any point $p$ in $U$ is given by the $\theta$-coordinates
given by $\theta= (\theta_{i}); i\in \{1 , 2 \dots ,n\}$.
\end{definition}
According to Amari's\cite{Shun}, the $l-$representation basis is
given by $\mathcal{B}_{\theta}l(x , \theta) =
\left\{\partial_{1}l(x, \theta),
\partial_{2}l(x, \theta)\right\}$ where $\mathcal{B}_{\theta} = \partial_{1},
\partial_{2}\}$ is the natural basis.
In the case of gaussian model we have $\partial_{1} =
\frac{\partial}{\partial \mu}$, and $\partial_{2} =
\frac{\partial}{\partial \sigma}$ therefore \; $\left\{
\frac{\partial}{\partial \mu}, \frac{\partial}{\partial
\sigma}\right\} = \mathcal{B}_{\theta} $ The base associated to the
$l$- representation is therefore $ \left\{
\partial_{1} l(x, \theta),
\partial_{2}l(x, \theta) \right\}$. As
\begin{eqnarray}
l(x, \theta) &=&  -\log \sqrt{2\pi} \sigma - \frac{(x - \mu)^{2}}{2
\sigma^{2}}\label{4}
\end{eqnarray}
It follows that
\begin{eqnarray*}
   \partial_{1}l(x, \theta) &=&  -
\frac{1}{\sigma} +\frac{(x - \mu)^{2}}{ \sigma^{3}}\label{3}
\end{eqnarray*}
So  \begin{eqnarray*}\mathcal{B}_{\theta}l(x,\theta) &=&
\left\{\frac{(x - \mu)}{ \sigma^{2}} ,- \frac{1}{\sigma} +\frac{(x -
\mu)^{2}}{ \sigma^{3}} \right\}.\end{eqnarray*}

\begin{remark}\cite{Shun}
There are several other coordinate functions which at a point $p$,
are such that: $\psi(p) = \xi = \left( \xi_{1}, \xi_{2}, \dots,
\xi_{n}\right)$ where $\psi$ is a homeomorphism of an open $V$ of
$S$ in $\mathbb{R}^{n}$ is the $\xi$-coordinate system. The
coordinates $ \left(\xi_{i}\right),\; i \in \{ 1, \dots, n \}$,
characterize the point $p$ in $\mathbb{R}^{n}$. Let's give ourselves
two system coordinates $\theta$ and $\xi$ There is one and only one
correspondence between the coordinates $\theta$ and $\xi$; given the
properties enjoyed by the applications $\phi$ and \; $\psi$, and the
fact that they are homeomorphisms we have the following equations
\begin{eqnarray*}\xi &=& \psi \circ \phi^{-1}(\theta), \textrm{and}\; \theta = \phi
\circ \psi^{-1}(\xi)
\end{eqnarray*}
 The transformation of $ \theta $ a would
be a diffeomorphism, if the $n$-functions
$\xi_{i}\left(\theta_{1},\dots, \theta_{n}\right)$ are
differentiable and $Det\left|\frac{\partial \xi_{i}}{\partial
\theta_{j}}\right|$ is non-zero.
\end{remark}

\subsection{Determination of the characteristic elements of the $\xi$-
coordinates.} Let $S = \left\{p_{
\theta}(x)=\frac{1}{\sqrt{2\pi}\sigma }e^{-\frac{(
x-\mu)^{2}}{2\sigma^{2}}},\left.
                        \begin{array}{ll}
                         \theta\in \mathbb{R}\times \mathbb{R}^{*}_{+} & \hbox{} \\
                          x\in \mathbb{R}^{*}_{+} & \hbox{}
                        \end{array}
                      \right.
\right\}$ the gaussian statistical model. The tangent space of
$l-$representation is given by
\begin{eqnarray*}
T^{l}(\theta) &=  &=\left\{A(x);  A(x) =
A^{1}\frac{\partial_{1}l(x,\theta)}{\partial\mu}+A^{2}\frac{\partial_{2}l(x,\theta)}{\partial\sigma}
\right\}\nonumber
\end{eqnarray*}
 and using (\ref{3}) we have \begin{eqnarray*}
T^{l}(\theta)&=& \left\{A(x) \textrm{ tel que }  A(x)= A^{1}_{(\mu;
\sigma)}(x)\frac{x-\mu}{\sigma^{2}}+A^{2}_{(\mu;
\sigma)}(x)\frac{(x-\mu)^{2}}{\sigma^{3}} -\frac{A^{2}_{(\mu;
\sigma)}(x)}{\sigma} \right\}; A^{1},A^{2} \in
C^{\infty}(S)\end{eqnarray*}.

So $A(x) \in T^{l}(\theta)$ if and only if there exists $A^{1},
A^{2} \in C^{infty}(S)$ such that
\begin{eqnarray*}
A(x)&=&\frac{A^{2}}{\sigma^{3}}x^{2}+\left(\frac{A^{1}}{\sigma^{2}}-2\frac{A^{2}\mu}{\sigma^{3}}\right)x+\frac{A^{2}}{\sigma^{3}}\mu^{2}-\frac{A^{1}\mu}{\sigma^{2}}-\frac{A^{2}}{\sigma}\label{e}
\end{eqnarray*}
Let $a =\frac{A^{2}}{\sigma^{3}}$ ;
$b=\frac{A^{1}}{\sigma^{2}}-2\frac{A^{2}\mu}{\sigma^{3}}$ et
$c=\frac{A^{2}}{\sigma^{3}}\mu^{2}-\frac{A^{1}\mu}{\sigma^{2}}-\frac{A^{2}}{\sigma}$
 So $A(x)$ is of the form $ ax^{2}+bx + c $ we can be reassured that
$a=\frac{A^{2}}{\sigma^{3}}\neq 0$. We have thus prove the following
result. Put $frac{1}{\sigma^{3}}C^{\infty}(S)_{2}[x]$ the modulus of
polynomials of second degree with coefficients in
$\frac{1}{\sigma^{3}}C^{\infty}(S)$. Then $T^{l}(\theta)$ is a
subset of $\frac{1}{\sigma^{3}}C^{\infty}(S)_{2}[x]$.

\begin{remark}
According to \cite{Shun}, it is shown that
\begin{eqnarray}
 \mathbb{E}[\partial_{i}l(x , \theta)]&=&
0.\label{nu}
\end{eqnarray}
Furthermore, it follows from  that
\begin{eqnarray*}
\mathbb{E}[A(x)] &=& A^{1}\mathbb{E}[\partial_{1}l(x ,
\theta)]+A^{2}\mathbb{E}[\partial_{2}l(x , \theta)].\end{eqnarray*}
using (\ref{nu}) we obtain
  \begin{eqnarray*} \mathbb{E}[A(x)]&=& 0
\end{eqnarray*}
It follows from the relation (\ref{e}) that for all $A(x) \in
T^{l}(\theta)$ there exist $a$, $ b$ and $c$ in
$igma^{-3}C^{\infty}(S)$ such that $A(x) = ax^{2} + bx + c$. We have
therefore, $\mathbb{E}(A(x)) = 0$ if and only if $\mathbb{E}(ax^{2}
+ bx + c) = 0$ which is equivalent to $a\mathbb{E}(x^{2}) +
b\mathbb{E}(x) + c = 0$. We also know from the formula for
K$\ddot{o}$nig that  $V^{2}(x) = \sigma^{2} = \mathbb{E}[x^{2}] -
\mathbb{E}[x]^{2}$ so $\mathbb{E}[x^{2}] = \sigma^{2} + \mu^{2}$
because $\mathbb{E}(x) = \mu$\label{19}.  It follows that
\begin{eqnarray*}
c &=& - a( \mu^{2} + \sigma^{2}) - b\mu
\end{eqnarray*}
By setting $\xi_{2}  = \mu^{2} + \sigma^{2}$ et $\xi_{1}= \mu$. $\xi
= (\xi_{1}, \xi_{2}) = (\mu,  \mu^{2} + \sigma^{2} )$.

So $\xi = (\xi_{1}, \xi_{2}) = (\mu, \mu^{2} + \sigma^{2})$ is a
coordinate system parameterized by $(\mu, \sigma)$. For it suffices
show that $\theta$ is a diffeomorphism. Consider the application
\begin{eqnarray*}
\theta: \mathbb{R}^{2}&\rightarrow& \mathbb{R}^{2}: (\mu,
\sigma)\mapsto (\theta_{1}(\mu, \sigma), \theta_{2}(\mu, \sigma))=
(\xi_{1}, \xi_{2})=(\mu, \mu^{2}+ \sigma^{2})
\end{eqnarray*}
by noting that the components $\theta_{1}(\mu, \sigma)$ and
$\theta_{2}(\mu, \sigma)$ are differentiable, it is sufficient to
show that the Jacobian of $\theta$ is non-zero. Let $J_{M} = \left(
                                                                  \begin{array}{cc}
                                                                    \frac{\partial \theta_{1}}{\partial \mu} & \frac{\partial \theta_{1}}{\partial \sigma} \\
                                                                    \frac{\partial \theta_{2}}{\partial \mu} & \frac{\partial \sigma_{2}}{\partial \sigma} \\
                                                                  \end{array}
                                                                \right)
$

The Jacobian matrix of $\theta$. It follows that $J_{M} = \left(
                                                                  \begin{array}{cc}
                                                                   1 & 0 \\
                                                                   2\mu & 2\sigma \\\
                                                                  \end{array}
                                                                \right)$
and its determinant which is the Jacobian is $2\sigma \neq 0$
because $\sigma > 0$. It follows that according to the local
inversion theorem is locally a diffeomorphism of class $C^{1}$. In
coordinates we will need the natural basis and in the system of
$\xi$- coordinates we will need the basis associated with the $l$-
representation to be determined. To do this we use the Jacobian
matrix of the basis passage $\{\mathcal{B}_{\theta}\}$ to the
$\xi$-coordinate basis denoted by $\{\mathcal{B}_{\xi}\}=
\{\partial_{\xi_{1}}, \partial_{\xi_{2}}\}$. Denote by
\[(\overline{\mathcal{B}}^{i}_{\alpha} ) =  \left(
                                                  \begin{array}{cc}
                                                \frac{\partial \theta_{1}}{\partial \xi_{1}}     &  \frac{\partial \theta_{1}}{\partial \xi_{2}} \\
                                                  \frac{\partial \theta_{2}}{\partial \xi_{1}}   & \frac{\partial \theta_{2}}{\partial \xi_{2}} \\
                                                  \end{array}
                                                \right) = J_{M}^{-1}
\]
the inverse of the Jacobian matrix $J_{M}$, and that
\[(\partial_{i'}) = (\overline{\mathcal{B}}^{i}_{\alpha} )
(\partial_{i})\]. Furthermore, the inverse matrix \[J_{M}^{-1} =
\left(
                                                  \begin{array}{cc}
                                                   1 & 0 \\
                                                  \frac{- \mu}{\sigma}   & \frac{1}{2\sigma} \\
                                                  \end{array}
                                                \right)
                                                \]
\end{remark}

So, the $ \xi$- coordinates  system is given by $\xi = (\xi_{1},
\xi_{2}) = (\mu,  \mu^{2} + \sigma^{2}) $, and we have the $l$-
representation in the  $ \xi$-coordinates system is given by
\begin{equation*}
    \mathcal{B}_{\xi}l(x , \xi) = \{\partial_{\xi_{1}}l(x,
\xi),
\partial_{\xi_{2}}l(x, \xi)\}
\end{equation*}  where $ \mathcal{B}_{\xi} = \{\partial_{\xi_{1}},
\partial_{\xi_{2}}\}$ is the natural basis of the  Amari associated
manifold. where  $\partial_{\xi_{1}} = \frac{\partial}{\partial
\xi_{1}}$ and $ \partial_{\xi_{2}} = \frac{\partial}{\partial
\xi_{2}}$ thus \;$ \left\{ \frac{\partial}{\partial \xi_{1}},
\frac{\partial}{\partial \xi_{2}}\right\} = \mathcal{B}_{\xi}. $

\begin{theorem} Let $(S,g)$ the Riemannian manifold. The natural basis associated to $l$-representation in the
coordinated system $ (\xi_{1}, \xi_{2})$ is given by
\begin{equation*}
    \mathcal{B}(l, \xi)= \left\{ \frac{x - \mu}{\sigma^{2}} ;  \frac{(x-
\mu)^{2}}{2 \sigma^{4}} - \frac{\mu(x- \mu)}{\sigma^{3}} -
\frac{1}{2\sigma^{2}} \right\}.
\end{equation*}
\end{theorem}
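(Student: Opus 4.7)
The plan is to derive the two basis vectors $\partial_{\xi_1} l$ and $\partial_{\xi_2} l$ by applying the change-of-variable formula directly to the $l$-representation basis $\mathcal{B}_\theta l(x,\theta)$ in the original $\theta$-coordinates. The diffeomorphism $(\mu,\sigma) \mapsto (\mu,\mu^{2}+\sigma^{2}) = (\xi_1,\xi_2)$ has just been checked to be of class $C^{1}$ via the inverse function theorem, and its inverse Jacobian $J_M^{-1}$, whose entries are the partials $\partial\theta_i/\partial\xi_\alpha$, has been computed explicitly in the preceding paragraph; this is the only new ingredient needed.

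First I would recall from Section~2 the explicit computation of $\partial_\mu l = (x-\mu)/\sigma^{2}$ and $\partial_\sigma l = -1/\sigma + (x-\mu)^{2}/\sigma^{3}$, which together form the natural basis $\mathcal{B}_\theta l(x,\theta)$ in the $\theta$-coordinates. Then, reading off the two columns of $J_M^{-1}$, I would write out the linear combinations $\partial_{\xi_1} = \partial_\mu + (-\mu/\sigma)\,\partial_\sigma$ and $\partial_{\xi_2} = (1/(2\sigma))\,\partial_\sigma$, as dictated by the transformation rule $\partial_{\xi_\alpha} = \sum_i (\partial\theta_i/\partial\xi_\alpha)\,\partial_{\theta_i}$ given in the preliminaries.

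The final step is pure substitution: plugging the two expressions for $\partial_\mu l$ and $\partial_\sigma l$ into these two linear combinations, then grouping by powers of $(x-\mu)$ and by powers of $\sigma$, should yield the two elements of $\mathcal{B}(l,\xi)$ written in the statement, namely the first-order expression $(x-\mu)/\sigma^{2}$ in the $\xi_1$-direction and the polynomial $(x-\mu)^{2}/(2\sigma^{4}) - \mu(x-\mu)/\sigma^{3} - 1/(2\sigma^{2})$ in the $\xi_2$-direction. The main obstacle is bookkeeping rather than substance: the cross-terms of the form $\mu/\sigma^{k}$ and $(x-\mu)^{k}/\sigma^{m}$ must combine or cancel consistently for the compact polynomial form on the right-hand side of the claim to appear. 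Beyond that, no further geometric input is required, since the result is a direct corollary of the smoothness of the coordinate change already established.
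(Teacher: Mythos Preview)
Your strategy is the paper's: push the $\theta$-basis $\{\partial_\mu l,\partial_\sigma l\}$ to the $\xi$-basis via the inverse Jacobian and then substitute. The gap is in which axis of $J_M^{-1}$ you read. You take the \emph{columns} of
\[
J_M^{-1}=\begin{pmatrix}1&0\\[2pt] -\mu/\sigma&1/(2\sigma)\end{pmatrix}
\]
and write $\partial_{\xi_1}=\partial_\mu-(\mu/\sigma)\,\partial_\sigma$, $\partial_{\xi_2}=(1/(2\sigma))\,\partial_\sigma$. The paper instead uses the column-vector identity $(\partial_{\xi_1},\partial_{\xi_2})^{T}=J_M^{-1}(\partial_1,\partial_2)^{T}$, i.e.\ it reads the \emph{rows}, obtaining $\partial_{\xi_1}=\partial_\mu$ and $\partial_{\xi_2}=-(\mu/\sigma)\,\partial_\mu+(1/(2\sigma))\,\partial_\sigma$.

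With your coefficients the final substitution does \emph{not} collapse to the displayed basis: you get
\[
\partial_{\xi_1}l=\frac{x-\mu}{\sigma^{2}}+\frac{\mu}{\sigma^{2}}-\frac{\mu(x-\mu)^{2}}{\sigma^{4}},
\qquad
\partial_{\xi_2}l=\frac{(x-\mu)^{2}}{2\sigma^{4}}-\frac{1}{2\sigma^{2}},
\]
so the first element acquires a quadratic term in $(x-\mu)$ and the second loses the $-\mu(x-\mu)/\sigma^{3}$ term. The ``bookkeeping'' you anticipate will not close this; the mismatch is structural, not a matter of simplification. To reproduce the statement as written you must adopt the paper's row convention and take $\partial_{\xi_1}=\partial_\mu$, $\partial_{\xi_2}=-(\mu/\sigma)\partial_\mu+(1/(2\sigma))\partial_\sigma$. (Your column reading is in fact the one dictated by the chain rule $\partial_{\xi_\alpha}=\sum_i(\partial\theta_i/\partial\xi_\alpha)\,\partial_{\theta_i}$ together with the paper's own layout $(\overline{\mathcal{B}}^{\,i}_{\alpha})_{i\alpha}=\partial\theta_i/\partial\xi_\alpha$; the transpose is baked into the statement, and to match it you have to follow the paper's matrix equation rather than the index formula.)
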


\begin{proof}

$(\partial_{\xi})=(\bar{\mathcal{B}}^{i}_{\alpha})(\partial_{i})$,
$\alpha, i \in\left\{1,2\right\}$, we have
\[\left(
                                                                                          \begin{array}{c}
                                                                                            \partial_{\xi_{1}} \\
                                                                                            \partial_{\xi_{2}} \\
                                                                                          \end{array}
                                                                                        \right)
                                                                                        =
\left(
                                                          \begin{array}{cc}
                                                            1 & 0\\
                                                            -\frac{\mu}{\sigma} & \frac{1}{2\sigma} \\
                                                          \end{array}
                                                        \right)\left(
                                                                 \begin{array}{c}
                                                                    \partial_{1} \\
                                                                    \partial_{2} \\
                                                                 \end{array}
                                                               \right)
                                                               \]
So we have $\{\partial_{\xi_{1}}, \partial_{\xi_{2}}\} =
\{\partial_{1}, \frac{-\mu}{\sigma}\partial_{1} +
\frac{1}{2\sigma}\partial_{2}\}$ in $\theta$- coordinates. Its
$l$-representation is \{$\partial_{\xi}l(x,\theta)\}= \left\{
\partial_{1}l(x ,\theta), -\frac{\mu}{\sigma}\partial_{1}l(x ,\theta) + \frac{1}{2\sigma}\partial_{2}l(x
,\theta)\right\}$\\
We have $\left\{
   \begin{array}{ll}
     \partial_{\xi_{1}}l(x,\theta)= \frac{x - \mu}{\sigma^{2}}& \hbox{} \\
     \partial_{\xi_{2}}l(x,\theta) =  \frac{(x- \mu)^{2}}{2 \sigma^{4}} - \frac{\mu(x- \mu)}{\sigma^{3}} - \frac{1}{2\sigma^{2}} & \hbox{}
   \end{array}
 \right.
$
 \end{proof}

\begin{remark}
According to \cite{Shun}, the riemannian metric is given by
\begin{equation}\label{me2}
    g_{ij}(\theta) = <
\partial_{i}l(x, \theta),
\partial_{j}l(x, \theta)>  \textrm{where}, g_{ij}(\theta) = \mathbb{E}(\partial_{i}l(x,
\theta).
\partial_{j}l(x, \theta))=-\mathbb{E}(\partial_{i}
\partial_{j}l(x, \theta)).
\end{equation}
and $g^{ij}$ will be its reverse called stamp information of Fisher.
In the same, the metric one associated in the system from the $
\xi$- coordinates is defined by
\begin{equation}\label{def}
    g_{\alpha \beta}(\xi)
= < \partial_{\xi_{\alpha}},\partial_{\xi_{\beta}}>;\;   \alpha,
\beta \in \{1, 2 \}
\end{equation}
\end{remark}

\subsection{Affine connection.}
The collection of all $T_{p}S$ defined a $2n-$dimensional manifold
$T(S)$ called Tangent bundle of $S$. More explicitly, $T(S)$ is
completely defined by the projection mapping $\pi:TS\longrightarrow
S$ such that $\pi^{-1}(\{p\}) \cong\mathbb{R}^{n}$ and $\pi$ have
the usual local trivialization property. $A:S\longrightarrow TS$ is
a vector field on $S$ if $\pi\circ A=1_{S}$.
 The collection $\mathfrak{X}(S)$ of all vector fields on $S$ is an $C^{\infty}(S)-$module. An affine connection is
an $\mathbb{R}$-bilinear mapping\\
$ \nabla:\mathfrak{X}(S)\longrightarrow\mathfrak{X}(S):
(A,B)\mapsto\nabla_{A}B $
such that:\\
 $\nabla_{ A} (fB )= (Af) (B)  + f\nabla_{ A} (B),\;
 \nabla_{ fA} (B )= f\nabla_{ A} (B).$\\
In local coordinate
$\theta=\left(\theta_{1},\dots,\theta_{n}\right)$ of $S$ , $\nabla_{
\partial_{i}} (\partial_{j}
)=\sum\Gamma_{ij}^{k}\partial_{i}$ and $\Gamma_{ij}^{k}$ is
Christoffel coefficient given by: $ \Gamma_{ij}^{k}=
\frac{1}{2}g^{km}[\partial_{i}.g_{jm}+
\partial_{j}.g_{ik} - \partial_{k}.g_{ij}].
$
\subsection{Torsion tensor and curvature of affine connection.}
Let $\nabla$ be an affine-connection on $S$. A torsion tensor of
$\nabla$ is an $\mathbb{R}$-bilinear mapping
\begin{eqnarray*}T:\mathfrak{X}(S)\times
\mathfrak{X}(S)&\longrightarrow&\mathfrak{X}(S):(A,B)\longmapsto
T(A,B)=\nabla_{ A} (B)-\nabla_{ B} (A)-[A,B]\end{eqnarray*} In the
case of statistical manifold
 $S =
\left\{p_{ \theta}(x),\left.
                        \begin{array}{ll}
                         \theta\in \Theta & \hbox{} \\
                          x\in \mathcal{X} & \hbox{}
                        \end{array}
                      \right.
\right\}$. Amari \cite{Shun}, show that
 \begin{eqnarray}\label{e4}\mathbb{E}[\partial_{i}l(x , \theta)]&=&
0.\end{eqnarray} Where $l(x , \theta)$ is the $l-$representation of
$T_{\theta}$. In this case by taking into consideration the Amari
\cite{Shun}, the result affine connection  of $T_{\theta}$ on
statistical manifold $S = \left\{p_{ \theta}(x),\left.
                        \begin{array}{ll}
                         \theta\in \Theta & \hbox{} \\
                          x\in \mathcal{X} & \hbox{}
                        \end{array}
                      \right.
\right\}$ is define by
\begin{eqnarray}\label{e5}\Gamma_{ijk}:=
\mathbb{E}\left[\partial_{i}\partial_{j}l(x,\theta).\partial_{k}l(x,\theta)\right].\end{eqnarray}
The family $\Gamma_{ijk}$ are called $l-$representation of
coefficients of the connection $\nabla$ on $S$. With respect to  the
$l-$representation, $T_{ijk}(\theta)= \langle
T(\partial_{\theta_{i}},\partial_{j}),\partial_{k}\rangle,$ it is
the coefficient of $T$ with respect to
$\partial_{\theta}l(x,\theta)$. Since $T(\partial_{i},
\partial_{j}) = \nabla_{\partial_{i}}(\partial_{j}) -
\nabla_{\partial_{j}}(\partial_{i}) - (\partial_{i}\partial_{j}
-\partial_{j}\partial_{i})$, then $T_{ijk}(\theta)=
<\nabla_{\partial_{i}}(\partial_{j}) -
\nabla_{\partial_{j}}(\partial_{i}) - (\partial_{i}\partial_{j}
-\partial_{j}\partial_{i}),
\partial_{k}>.$
We obtain the following expression: $T_{ijk}(\theta)=
<\nabla_{\partial_{i}}(\partial_{j}),
\partial_{k}> -
<\nabla_{\partial_{j}}(\partial_{i}),
\partial_{k}>.$ Therefore,
\begin{eqnarray}\label{e6}
T_{ijk}(\theta)=\Gamma_{ijk}(\theta ) - \Gamma_{jik}(\theta).
\end{eqnarray}

The curvature tensor of affine connection $\nabla$ on a $n$-manifold
$S$ is a $\mathbb{R}-$trilinear mapping $R:\mathfrak{X}(S)\times
\mathfrak{X}(S)\times \mathfrak{X}(S)\longrightarrow\mathfrak{X}(S):
(X,Y,Z)\longmapsto R(X,Y)Z$\\ such that
$R(X,Y)Z=[\nabla_{X},\nabla_{Y}]Z-\nabla_{[X,Y]}Z,$ with
$[\nabla_{X},\nabla_{Y}]Z=\nabla_{X}\nabla_{Y}Z
-\nabla_{Y}\nabla_{X}Z.$
 In a locale coordinates $\theta=\left(\theta_{1}, \dots,\theta_{n}\right)$ of $S$, we have
$R(\partial_{i},\partial_{j}).\partial_{k}=\sum_{m=1}^{n}R_{ijk}^{m}\partial_{m}$\\
where $R_{ijk}^{m}=\frac{\partial
\Gamma_{kl}^{m}}{\partial_{j}}-\frac{\partial
\Gamma_{jl}^{m}}{\partial_{k}}+\Gamma_{js}^{m}\Gamma_{kl}^{s}-\Gamma_{ks}^{m}\Gamma_{jl}^{s}.$
Taking in consideration the $l-$representation of $T_{\theta}$ on
statistical manifold
 $S =
\left\{p_{ \theta}(x),\left.
                        \begin{array}{ll}
                         \theta\in \Theta & \hbox{} \\
                          x\in \mathcal{X} & \hbox{}
                        \end{array}
                      \right.
\right\}$. The family
\begin{eqnarray}\label{e7}
R_{ijkm}:=\langle
R(\partial_{i},\partial_{j},\partial_{k}),\partial_{m}\rangle\end{eqnarray}
is the $l-$ representation of Riemann-Christoffel curvature tensor
of the affine connection $\nabla$ on $S.$ It follow From (\ref{e7})
that $R_{ijkm}$ the following expression:
\begin{eqnarray}\label{e8}
 R_{ijkm}=\left(\partial_{i}\Gamma^{s}_{jk} -
\partial_{j}\Gamma^{s}_{ik} \right)g_{sm}+\left(\Gamma_{irm}\Gamma^{r}_{jk}- \Gamma_{jrm}\Gamma^{r}_{ik}\right).\end{eqnarray}
where $g_{sm}$ are coefficients of the Riemannian metric $g$ on $S$.
A space with an affine connection is said to be flat when the
Riemann-Christoffel curvature and torsion vanishes identically, i.e;
$R_{ijkm}(\theta)=0$ holds for any $\theta$. In the coordinate
system $\theta=\left(\theta_{1},\dots,\theta_{n}\right)$, the scalar
curvature $\mathcal{K }$ of affine connection $\nabla$ is defined by
\begin{eqnarray}\label{e9}
 \mathcal{K}=\frac{1}{n(n-1)}R_{ijkm}g^{im}g^{jk}.\end{eqnarray}
So we have the  scalar curve in the frame of reference $ \xi$ the
noted number $ \mathcal{K}$  define by
\begin{equation}\label{def1}\mathcal{K}= \frac{1}{2}R_{\alpha \beta \gamma
\eta}g^{\alpha \eta} g^{\beta \gamma}.\end{equation}

According to \cite{Shun}, and using \ref{me2}, we have
 \begin{eqnarray*}g_{11}(\theta)&=&  - \mathbb{E}\left[ \partial_{1}\partial_{1}l(x, \theta)\right]\\
&=& - \mathbb{E}\left[ -\frac{1}{\sigma^{2}}\right]\\
 &=& \frac{1}{\sigma^{2}}.
\end{eqnarray*}

\begin{eqnarray*} g_{22}(\theta)&=& - \mathbb{E}\left[ \partial_{2}\partial_{2}l(x, \theta)\right]\\
&=& - \mathbb{E}\left[ \frac{1}{\sigma^{2}} - \frac{3(x- \mu)^{2}}{\sigma^{4}}\right]\\
&=& -\mathbb{E}\left(\frac{1}{\sigma^{2}}\right) -\mathbb{E}\left[- \frac{3(x- \mu)^{2}}{\sigma^{4}}\right]\\
&=& -\frac{1}{\sigma^{2}} + 3\mathbb{E}\left[ \frac{(x- \mu)^{2}}{\sigma^{4}}\right]  \\
&=& -\frac{1}{\sigma^{2}} + 3\mathbb{E}\left[ \partial_{1}l(x, \theta).\partial_{1}l(x, \theta)\right] \\
&=& -\frac{1}{\sigma^{2}} - 3\mathbb{E}\left[ \partial_{1}\partial_{1}l(x, \theta)\right]\\
&=&-\frac{1}{\sigma^{2}} - 3\mathbb{E}\left[ -\frac{1}{\sigma^{2}}\right]\\
&=&-\frac{1}{\sigma^{2}} +\frac{3}{\sigma^{2}}\\
&=& \frac{2}{\sigma^{2}}
\end{eqnarray*}

\begin{eqnarray*}
g_{12}(\theta)&=& - \mathbb{E}\left[ \partial_{1}\partial_{2}l(x, \theta)\right]\\
&=& - \mathbb{E}\left[ \frac{2}{\sigma^{3}}(x- \mu)\right]\\
&=& -\frac{2}{\sigma^{\sigma}}\mathbb{E}\left[\frac{(x - \mu)}{\sigma^{2}}\right] \\
&=& -\frac{2}{\sigma}\mathbb{E}\left[\partial_{1}l(x, \theta)\right]\\
&=& 0
\end{eqnarray*}

\begin{eqnarray*}
g_{21}(\theta)&=& - \mathbb{E}\left[ \partial_{2}\partial_{1}l(x, \theta)\right]\\
&=& - \mathbb{E}\left[ -\frac{2}{\sigma^{3}}(x- \mu)\right]\\
&=& \frac{2}{\sigma^{3}}\mathbb{E}\left[(x - \mu)\right]\\
&=& \frac{2}{\sigma}\mathbb{E}\left[\frac{(x - \mu)}{\sigma^{2}}\right]\\
&=& \frac{2}{\sigma}\mathbb{E}\left[\partial_{1}l(x, \theta)\right]\\
&=& 0
\end{eqnarray*}

So, the metric one associated for a manifold of gaussian model $S$
is given by
\begin{equation}\label{b6}
    g_{ij}(\theta) = \left(
\begin{array}{cc}
\frac{1}{\sigma^{2}} & 0 \\
0 & \frac{2}{\sigma^{2}} \\
\end{array}
\right)
\end{equation}

\section{Riemannian metric in $\xi$-coordinates system.}

\begin{theorem}\label{b7}

 Let $\xi = \left(\mu,  \mu^{2} + \sigma^{2}\right)$ be the dual coordinate system
 associated with the Gaussian statistical manifold.
  Let $\left( g_{ij}\right)_{1\leq
i,j\leq2}$ denote the Riemannian
   metric expressed in the natural coordinate system $\theta = (\mu, \sigma)$,
   as defined by equation (\ref{b6}). Let $(\overline{\mathcal{B}}^{i}_{\alpha} )$ denote the inverse of the
   Jacobian matrix of the coordinate transformation from $\theta$ to $\xi$, given explicitly by:
$(\overline{\mathcal{B}}^{i}_{\alpha} ) = \left(
                                                  \begin{array}{cc}
                                                   1 & 0 \\
                                                  \frac{- \mu}{\sigma}   & \frac{1}{2\sigma} \\
                                                  \end{array}
                                                \right)$
 Then, the components of the Riemannian metric $g_{\alpha\beta}(\xi)$ in the $\xi$-coordinate system are given by:
\begin{equation*}
    g_{\alpha
\beta}(\xi) =
\frac{1}{\sigma^{2}}\bar{B_{\alpha}}^{1}\bar{B_{\beta}}^{1} +
\frac{2}{\sigma^{2}}\bar{B_{\alpha}}^{2}\bar{B_{\beta}}^{2}
\end{equation*}
$\alpha,\; \beta\in\left\{1, 2\right\}$.
\end{theorem}

\begin{proof}
Let $\theta = (\mu, \sigma)$ denote the natural coordinate system
and $\xi = (\mu, \mu^2 + \sigma^2)$ the dual coordinate system on
the Gaussian manifold. From the coordinate transformation, the
Jacobian matrix of the map $\theta \mapsto \xi$ is given by:
$$
J =
\begin{pmatrix}
\frac{\partial \xi_{1}}{  \partial \mu }& \frac{\partial \xi_{1}}{  \partial \sigma } \\
\frac{\partial \xi_{2}}{  \partial \mu } & \frac{\partial \xi_{2}}{
\partial \sigma }
\end{pmatrix}
=
\begin{pmatrix}
1 & 0 \\
2\mu & 2\sigma
\end{pmatrix},
$$
whose inverse is:
$$
J^{-1} = (\overline{\mathcal{B}}^{i}_{\alpha} ) =
\begin{pmatrix}
1 & 0 \\
-\frac{\mu }{ \sigma} & \frac{1}{ 2\sigma}
\end{pmatrix}.
$$
Let $\{\partial_1, \partial_2\} = \left\{ \frac{\partial}{\partial
\mu}, \frac{\partial}{\partial \sigma} \right\}$ be the basis
associated with the $\theta$-coordinates, and $\{ \partial_{\xi_1},
\partial_{\xi_2} \}$ the basis in the $\xi$-coordinate system. Using
the chain rule and the inverse Jacobian, the basis transformation is
\begin{equation*}\partial_{\xi_{\alpha}} = \sum\limits_{i
=1}^{2}\bar{B_{\alpha}}^{i}\partial_{i}=\bar{B_{\alpha}}^{1}\partial_{1}+
\bar{B_{\alpha}}^{2}\partial_{2},\; and\;
\partial_{\xi_{\beta}} = \sum\limits_{j
=1}^{2}\bar{B_{\beta}}^{j}\partial_{j}=
\bar{B_{\beta}}^{1}\partial_{1}+ \bar{B_{\beta}}^{2}\partial_{2}.
\end{equation*}
 Then, the components of
the Riemannian metric in the $\xi$-coordinates are given by the
inner product
 \begin{equation*}g_{\alpha \beta}(\xi) = <\partial_{\xi_{\alpha}},
\partial_{\xi_{\beta}}>,\;\alpha,\; \beta\in\left\{1, 2\right\}.\end{equation*}
 or $\bar{B_{\alpha}}^{i} = \left( \frac{\partial
\theta_{i}}{\partial \xi_{\alpha}}\right)$
 We have
 \begin{eqnarray*}
  g_{\alpha \beta}(\xi)&=& <\bar{B_{\alpha}}^{1}\partial_{1}+ \bar{B_{\alpha}}^{2}\partial_{2} ,  \bar{B_{\beta}}^{1}\partial_{1}+ \bar{B_{\beta}}^{2}\partial_{2} >\\
&=& <\bar{B_{\alpha}}^{1}\partial_{1} ,
\bar{B_{\beta}}^{1}\partial_{1} > +
<\bar{B_{\alpha}}^{1}\partial_{1} , \bar{B_{\beta}}^{2}\partial_{2}>
 + <\bar{B_{\alpha}}^{2}\partial_{2} ,\bar{B_{\beta}}^{1}\partial_{1}> + <\bar{B_{\alpha}}^{2}\partial_{2} , \bar{B_{\beta}}^{2}\partial_{2}>\\
&=& \bar{B_{\alpha}}^{1}\bar{B_{\beta}}^{1}g_{11}(\theta) +
\bar{B_{\alpha}}^{2}\bar{B_{\beta}}^{2}g_{22}(\theta)
\end{eqnarray*} $\forall \alpha,\; \beta\in\left\{1, 2\right\}$.
So,\begin{equation*}
    g_{\alpha
\beta}(\xi) =
\frac{1}{\sigma^{2}}\bar{B_{\alpha}}^{1}\bar{B_{\beta}}^{1} +
\frac{2}{\sigma^{2}}\bar{B_{\alpha}}^{2}\bar{B_{\beta}}^{2}
\end{equation*}
\end{proof}

While being  (\ref{b6}) and (\ref{b7}) there is the proof of
following theorem
\begin{theorem}\label{b5}
 Let $\xi = \left(\mu,  \mu^{2} + \sigma^{2}\right)$ be the dual coordinate
  system associated with the natural coordinates $\theta = \left(\mu, \sigma\right)$
  on the Gaussian statistical manifold. Then, the Riemannian metric $G_d$
  in the $\xi$-coordinate system is given by:
\begin{equation*}
    G_{d} = \frac{1}{\sigma^{4}}\left(
\begin{array}{cc}
\sigma^{2} + 2 \mu^{2} & - \mu\\
- \mu & \frac{1}{2}
\end{array}
\right),
\end{equation*}
 The determinant of this metric tensor is:
\begin{equation*}
    Det G_{d} = \frac{1}{2\sigma^{6}}
\end{equation*}
 The inverse of the Fisher information matrix in the $\xi$-coordinate system, denoted $G_d^{-1}$, is given by:
\begin{equation*}
    G_{d}^{-1} = \left(
\begin{array}{cc}
\sigma^{2}   & 2 \mu \sigma^{2}\\
2 \mu \sigma^{2} & 2 \sigma^{4} + 4 \mu^{2}\sigma^{2}
\end{array}
\right)
\end{equation*}

\end{theorem}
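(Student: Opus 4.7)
The strategy is to apply Theorem \ref{b7} as a black box and convert the problem entirely into matrix arithmetic. Since the $\theta$-metric computed in (\ref{b6}) is diagonal with $g_{11}(\theta)=1/\sigma^{2}$ and $g_{22}(\theta)=2/\sigma^{2}$, the formula of Theorem \ref{b7} has already absorbed the fact that $g_{12}(\theta)=g_{21}(\theta)=0$, so the transfer from the $\theta$-metric to the $\xi$-metric reduces to plain substitution. First I would read off the four entries of the inverse Jacobian displayed just before Theorem \ref{b7}, namely $\bar{B}_{1}^{1}=1$, $\bar{B}_{1}^{2}=-\mu/\sigma$, $\bar{B}_{2}^{1}=0$, and $\bar{B}_{2}^{2}=1/(2\sigma)$, being careful with the convention that the upper index $i$ labels rows while the lower index $\alpha$ labels columns of the matrix $(\bar{\mathcal{B}}_{\alpha}^{i})$.

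Next I would evaluate the three distinct components by substituting these values into
\[
g_{\alpha\beta}(\xi)=\bar{B}_{\alpha}^{1}\bar{B}_{\beta}^{1}\,g_{11}(\theta)+\bar{B}_{\alpha}^{2}\bar{B}_{\beta}^{2}\,g_{22}(\theta).
\]
This produces $g_{11}(\xi)=(\sigma^{2}+2\mu^{2})/\sigma^{4}$, $g_{12}(\xi)=g_{21}(\xi)=-\mu/\sigma^{4}$, and $g_{22}(\xi)=1/(2\sigma^{4})$. Factoring $1/\sigma^{4}$ out of the resulting $2\times 2$ matrix then recovers the claimed form of $G_{d}$.

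To obtain the determinant I would apply the elementary $2\times 2$ determinant formula to $G_{d}$ and check that the $\mu^{2}$ terms cancel, leaving a purely $\sigma$-dependent scalar. The inverse $G_{d}^{-1}$ is then extracted from the adjugate formula $G_{d}^{-1}=(\det G_{d})^{-1}\,\mathrm{adj}(G_{d})$, where the entries of the adjugate are read off immediately from $G_{d}$ and only the rescaling by $(\det G_{d})^{-1}$ requires a short verification to match the stated matrix. I do not foresee any conceptual obstacle: the proof is driven entirely by Theorem \ref{b7} and the explicit form of the inverse Jacobian, so the sole difficulty is careful index bookkeeping and the routine algebra of inverting a symmetric $2\times 2$ matrix.
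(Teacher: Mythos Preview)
Your proposal is correct and follows essentially the same route as the paper: it invokes Theorem~\ref{b7}, reads off the entries of the inverse Jacobian, and substitutes into the diagonal transfer formula to obtain each $g_{\alpha\beta}(\xi)$. The paper in fact stops after computing the four entries and writes ``We have the result,'' so your explicit determinant and adjugate steps go slightly beyond what the paper records; note, incidentally, that carrying out your determinant step carefully gives $\det G_d = 1/(2\sigma^{6})$, which is consistent with the stated $G_d^{-1}$ but differs from the $1/(2\sigma^{2})$ printed in the statement.
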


\begin{proof}
Using theorem \ref{b7}, we have \begin{equation*}g_{11} (\xi) =
\bar{B_{1}}^{1}\bar{B_{1}}^{1} g_{11}(\theta)
+\bar{B_{1}}^{2}\bar{B_{1}}^{2} g_{22}(\theta)\end{equation*}
\begin{equation*}g_{11}(\xi) = \left(\frac{\partial
\theta_{1}}{\partial \xi_{1}}\right)\left(\frac{1}{\sigma^{2}}
\right) + \left(\frac{\partial \theta_{2}}{\partial
\xi_{1}}\right)\left(\frac{2}{\sigma^{2}} \right)\end{equation*}
because $\theta_{1} = \mu, \; \xi_{1} = \mu $

\begin{eqnarray*}
g_{11} (\xi) &=& \frac{1}{\sigma^{2}} + \frac{2 \mu^{2}}{\sigma^{4}}\\
&=& \frac{1}{\sigma^{2}} \left(\frac{\sigma^{2} + 2
\mu^{2}}{\sigma^{2}}\right) \\
&=& \frac{1}{\sigma^{4}} \left(\sigma^{2} + 2 \mu^{2}\right)
\end{eqnarray*}
then we have
\begin{eqnarray*}
g_{22}(\xi) &=& \bar{B_{2}}^{1}\bar{B_{2}}^{1} g_{11}(\theta) + \bar{B_{2}}^{2}\bar{B_{2}}^{2} g_{22}(\theta)\\
g_{22}(\xi) &=& \left(\bar{B_{2}}^{1}\right)^{2}  g_{11}(\theta) +\left(\bar{B_{2}}^{2}\right)^{2}  g_{22}(\theta)\\
g_{22}(\xi) &=& \left(\bar{B_{2}}^{1}\right)^{2}
\left(\frac{1}{\sigma^{2}}\right) +\left(\bar{B_{2}}^{2}\right)^{2}
\left(\frac{2}{\sigma^{2}}\right)\\
g_{22}(\xi) &=&\left(\frac{1}{2\sigma}\right)^{2}\left(\frac{2}{\sigma^{2}}\right)\\
&=& \frac{2}{\sigma^{4}} \\
i.e. \; g_{22}(\xi)&= \frac{1}{2\sigma^{4}}
\end{eqnarray*}
\begin{eqnarray*}
g_{12}(\xi) &=& \bar{B_{1}}^{1}\bar{B_{2}}^{1}g_{11}(\theta) +
\bar{B_{1}}^{2}\bar{B_{2}}^{2}g_{22}(\theta)\\
&=& \frac{-\mu}{\sigma^{4}}
\end{eqnarray*}

\begin{eqnarray*}
g_{21}(\xi)&=& \bar{B_{2}}^{1}\bar{B_{1}}^{1}g_{11}(\theta) + \bar{B_{2}}^{1}\bar{B_{1}}^{2}g_{12}(\theta)+ \bar{B_{2}}^{2}\bar{B_{1}}^{1}g_{21}(\theta)+ \bar{B_{2}}^{2}\bar{B_{1}}^{2}g_{22}(\theta) \\
&=& \bar{B_{2}}^{1}\bar{B_{1}}^{1}g_{11}(\theta) + \bar{B_{2}}^{2}\bar{B_{1}}^{2}g_{22}(\theta)\\
&=& \frac{-\mu}{\sigma^{4}}
\end{eqnarray*}we have the following metric
\begin{equation*}
    G_{d} = \frac{1}{\sigma^{4}}\left(
\begin{array}{cc}
\sigma^{2} + 2 \mu^{2} & - \mu\\
- \mu & \frac{1}{2}
\end{array}
\right),
\end{equation*}
 and
\begin{equation*}
    Det G_{d} = \frac{1}{2\sigma^{6}}
\end{equation*}
So, we have the result
\end{proof}

\section{Torsion and curvature tensor in $\xi$-coordinates system.}

\begin{theorem}\label{b50}
Let $\nabla^{(1)}$ be the $\alpha$-connection
 of Amari with $\alpha = 1$. Then,
 the torsion tensor $T^\gamma_{\alpha\beta}(\xi)$ in the dual coordinate
  system $\xi = \left(\mu,  \mu^{2} + \sigma^{2}\right)$ is non-zero. This is in contrast
   to the natural coordinate system $\theta$, where the torsion vanishes
   for the Levi-Civita connection. The presence of torsion in $\xi$-coordinates
   reflects the fact that duality does not preserve symmetry of the connection.
\end{theorem}

\begin{proof}According to Amari's \cite{Shun}, we have Christoffel's coefficient in
$\theta$-coordinate system on gaussian statistical manifold
\[\Gamma^{(1)}_{111}(\theta) = \Gamma^{(1)}_{112}(\theta) =
\Gamma^{(1)}_{221}(\theta) =
\Gamma^{(1)}_{122}(\theta)=\Gamma^{(1)}_{212}(\theta) = 0  \]
\[\Gamma^{(1)}_{121}(\theta) = \Gamma^{(1)}_{211}(\theta) = -\frac{2}{\sigma^{3}}, \; \Gamma^{(1)}_{222}(\theta) =  -\frac{6}{\sigma^{3}}. \]Calculation of the coefficients of
christoffel's in the system of coordinates $ \xi$.
\begin{eqnarray*}
% \nonumber to remove numbering (before each equation)
  \Gamma^{1}_{111}(\xi) &=& \sum\limits_{i= 1}^{2} \sum\limits_{j= 1}^{2}\sum\limits_{k= 1}^{2} \bar{B_{1}}^{i}\bar{B_{1}}^{j}\bar{B_{1}}^{k}\Gamma_{ijk}(\theta) \\
   &=& \bar{B_{1}}^{1}\bar{B_{1}}^{1}\bar{B_{1}}^{1}\Gamma_{111}(\theta)+\bar{B_{1}}^{1}\bar{B_{1}}^{1}\bar{B_{1}}^{2}\Gamma_{112}(\theta) +\nonumber\\
&&\bar{B_{1}}^{1}\bar{B_{1}}^{2}\bar{B_{1}}^{1}\Gamma_{121}(\theta)+\bar{B_{1}}^{1}\bar{B_{1}}^{2}\bar{B_{1}}^{2}\Gamma_{122}(\theta) + \nonumber\\
&&+ \bar{B_{1}}^{2}\bar{B_{1}}^{1}\bar{B_{1}}^{1}\Gamma_{211}(\theta)+\bar{B_{1}}^{2}\bar{B_{1}}^{1}\bar{B_{1}}^{2}\Gamma_{212}(\theta) +\nonumber\\
&&\bar{B_{1}}^{2}\bar{B_{1}}^{2}\bar{B_{1}}^{1}\Gamma_{221}(\theta)+\bar{B_{1}}^{2}\bar{B_{1}}^{2}\bar{B_{1}}^{2}\Gamma_{222}(\theta) \\
  &=& -\frac{\mu}{\sigma}.(\frac{-2}{\sigma^{3}}) - \frac{\mu}{\sigma}.(\frac{-2}{\sigma^{3}})  + \frac{-\mu^{3}}{\sigma^{3}}.(\frac{-6}{\sigma^{3}})\\
 &=& \frac{4\mu\sigma^{2} + 6\mu^{3}}{\sigma^{6}}\\
 \Gamma_{111}(\xi)&=& \frac{4\mu\sigma^{2} +
6\mu^{3}}{\sigma^{6}}
\end{eqnarray*}

By analogy with calculation preceding one a:
\begin{eqnarray*}
\Gamma_{112}(\xi)&=& \sum\limits_{i= 1}^{2} \sum\limits_{j= 1}^{2}\sum\limits_{k= 1}^{2} \bar{B_{1}}^{i}\bar{B_{1}}^{j}\bar{B_{2}}^{k}\Gamma_{ijk}(\theta) \\
&=& \bar{B_{1}}^{1}\bar{B_{1}}^{1}\bar{B_{2}}^{1}\Gamma_{111}(\theta)+\bar{B_{1}}^{1}\bar{B_{1}}^{1}\bar{B_{2}}^{2}\Gamma_{112}(\theta) +\nonumber\\
&&\bar{B_{1}}^{1}\bar{B_{1}}^{2}\bar{B_{2}}^{1}\Gamma_{121}(\theta)+\bar{B_{1}}^{1}\bar{B_{1}}^{2}\bar{B_{2}}^{2}\Gamma_{122}(\theta) + \nonumber\\
&& \bar{B_{1}}^{2}\bar{B_{1}}^{1}\bar{B_{2}}^{1}\Gamma_{211}(\theta)+\bar{B_{1}}^{2}\bar{B_{1}}^{1}\bar{B_{2}}^{2}\Gamma_{212}(\theta) +\nonumber\\
&&\bar{B_{1}}^{2}\bar{B_{1}}^{2}\bar{B_{2}}^{1}\Gamma_{221}(\theta)+\bar{B_{1}}^{2}\bar{B_{1}}^{2}\bar{B_{2}}^{2}\Gamma_{222}(\theta) \\
 &=&   \bar{B_{1}}^{2}\bar{B_{1}}^{2}\bar{B_{2}}^{2}\Gamma_{222}(\theta)\\
&=&  \frac{\mu^{2}}{\sigma^{2}}.(\frac{1}{2\sigma})(\frac{-6}{\sigma^{3}})\\
 \Gamma_{112}(\xi)&=&   - \frac{3\mu^{2}}{\sigma^{6}}
\end{eqnarray*}
In the same way,
\begin{eqnarray*}
\Gamma_{221}(\xi)&=& \sum\limits_{i= 1}^{2} \sum\limits_{j= 1}^{2}\sum\limits_{k= 1}^{2} \bar{B_{2}}^{i}\bar{B_{2}}^{j}\bar{B_{1}}^{k}\Gamma_{ijk}(\theta) \\
 &=& \bar{B_{2}}^{1}\bar{B_{2}}^{1}\bar{B_{1}}^{1}\Gamma_{111}(\theta)+\bar{B_{2}}^{1}\bar{B_{2}}^{1}\bar{B_{1}}^{2}\Gamma_{112}(\theta) +\nonumber\\
&&\bar{B_{2}}^{1}\bar{B_{2}}^{2}\bar{B_{1}}^{1}\Gamma_{121}(\theta)+\bar{B_{2}}^{1}\bar{B_{2}}^{2}\bar{B_{1}}^{2}\Gamma_{122}(\theta) + \nonumber \\
&&\bar{B_{2}}^{2}\bar{B_{2}}^{1}\bar{B_{1}}^{1}\Gamma_{211}(\theta)+\bar{B_{2}}^{2}\bar{B_{2}}^{1}\bar{B_{1}}^{2}\Gamma_{212}(\theta) +\nonumber\\
&&\bar{B_{2}}^{2}\bar{B_{2}}^{2}\bar{B_{1}}^{1}\Gamma_{221}(\theta)+\bar{B_{2}}^{2}\bar{B_{2}}^{2}\bar{B_{1}}^{2}\Gamma_{222}(\theta) \\
 &=&\bar{B_{2}}^{2}\bar{B_{2}}^{2}\bar{B_{1}}^{2}\Gamma_{222}(\theta)\\
 \Gamma_{221}(\xi)&=& \frac{3\mu}{2\sigma^{6}}
\end{eqnarray*}

\begin{eqnarray*}
\Gamma_{212}(\xi)&=& \sum\limits_{i= 1}^{2} \sum\limits_{j= 1}^{2}\sum\limits_{k= 1}^{2} \bar{B_{2}}^{i}\bar{B_{1}}^{j}\bar{B_{2}}^{k}\Gamma_{ijk}(\theta) \\
 &=& \bar{B_{2}}^{1}\bar{B_{1}}^{1}\bar{B_{2}}^{1}\Gamma_{111}(\theta)+\bar{B_{2}}^{1}\bar{B_{1}}^{1}\bar{B_{2}}^{2}\Gamma_{112}(\theta) +\nonumber\\
&&\bar{B_{2}}^{1}\bar{B_{1}}^{2}\bar{B_{2}}^{1}\Gamma_{121}(\theta)+\bar{B_{2}}^{1}\bar{B_{1}}^{2}\bar{B_{2}}^{2}\Gamma_{122}(\theta) + \nonumber\\
&&\bar{B_{2}}^{2}\bar{B_{1}}^{1}\bar{B_{2}}^{1}\Gamma_{211}(\theta)+\bar{B_{2}}^{2}\bar{B_{1}}^{1}\bar{B_{2}}^{2}\Gamma_{212}(\theta) +\nonumber\\
&&\bar{B_{2}}^{2}\bar{B_{1}}^{2}\bar{B_{2}}^{1}\Gamma_{221}(\theta)+\bar{B_{2}}^{2}\bar{B_{1}}^{2}\bar{B_{2}}^{2}\Gamma_{222}(\theta) \\
 \Gamma_{212}(\xi)&=&  \frac{3\mu}{2\sigma^{6}}
\end{eqnarray*}
In the same way in a similar way,
\begin{eqnarray*}
\Gamma_{121}(\xi)&=& \sum\limits_{i= 1}^{2} \sum\limits_{j= 1}^{2}\sum\limits_{k= 1}^{2} \bar{B_{1}}^{i}\bar{B_{2}}^{j}\bar{B_{1}}^{k}\Gamma_{ijk}(\theta) \\
  &=& \bar{B_{1}}^{1}\bar{B_{2}}^{1}\bar{B_{1}}^{1}\Gamma_{111}(\theta)+\bar{B_{1}}^{1}\bar{B_{2}}^{1}\bar{B_{1}}^{2}\Gamma_{112}(\theta) +\nonumber\\
&&\bar{B_{1}}^{1}\bar{B_{2}}^{2}\bar{B_{1}}^{1}\Gamma_{121}(\theta)+\bar{B_{1}}^{1}\bar{B_{2}}^{2}\bar{B_{1}}^{2}\Gamma_{122}(\theta) + \nonumber \\
&&\bar{B_{1}}^{2}\bar{B_{2}}^{1}\bar{B_{1}}^{1}\Gamma_{211}(\theta)+\bar{B_{1}}^{2}\bar{B_{2}}^{1}\bar{B_{1}}^{2}\Gamma_{212}(\theta) +\nonumber\\
&&\bar{B_{1}}^{2}\bar{B_{2}}^{2}\bar{B_{1}}^{1}\Gamma_{221}(\theta)+\bar{B_{1}}^{2}\bar{B_{2}}^{2}\bar{B_{1}}^{2}\Gamma_{222}(\theta) \\
 &=& \bar{B_{1}}^{1}\bar{B_{2}}^{2}\bar{B_{1}}^{1}\Gamma_{121}(\theta) + \bar{B_{1}}^{2}\bar{B_{2}}^{2}\bar{B_{1}}^{2}\Gamma_{222}(\theta)\\
\ \Gamma_{121}(\xi)&=& -\frac{1}{\sigma^{4}} -
\frac{3\mu^{2}}{\sigma^{6}}
\end{eqnarray*}

In the same way one obtains

\begin{eqnarray*}
\Gamma_{211}(\xi)&=& \sum\limits_{i= 1}^{2} \sum\limits_{j= 1}^{2}\sum\limits_{k= 1}^{2} \bar{B_{2}}^{i}\bar{B_{1}}^{j}\bar{B_{1}}^{k}\Gamma_{ijk}(\theta) \\
  &=& \bar{B_{2}}^{1}\bar{B_{1}}^{1}\bar{B_{1}}^{1}\Gamma_{111}(\theta)+\bar{B_{2}}^{1}\bar{B_{1}}^{1}\bar{B_{1}}^{2}\Gamma_{112}(\theta) +\nonumber\\
&&\bar{B_{2}}^{1}\bar{B_{1}}^{2}\bar{B_{1}}^{1}\Gamma_{121}(\theta)+\bar{B_{2}}^{1}\bar{B_{1}}^{2}\bar{B_{1}}^{2}\Gamma_{122}(\theta) +\nonumber \\
&&\bar{B_{2}}^{2}\bar{B_{1}}^{1}\bar{B_{1}}^{1}\Gamma_{211}(\theta)+\bar{B_{2}}^{2}\bar{B_{1}}^{1}\bar{B_{1}}^{2}\Gamma_{212}(\theta) +\nonumber\\
&&\bar{B_{2}}^{2}\bar{B_{1}}^{2}\bar{B_{1}}^{1}\Gamma_{221}(\theta)+\bar{B_{2}}^{2}\bar{B_{1}}^{2}\bar{B_{1}}^{2}\Gamma_{222}(\theta) \\
 &=& \bar{B_{2}}^{2}\bar{B_{1}}^{1}\bar{B_{1}}^{1}\Gamma_{211}(\theta)  + \bar{B_{2}}^{2}\bar{B_{1}}^{2}\bar{B_{1}}^{2}\Gamma_{222}(\theta)\\
\Gamma_{211}(\xi)&=& - \frac{1}{\sigma^{4}} -
\frac{3\mu^{2}}{\sigma^{6}}
\end{eqnarray*}

\begin{eqnarray*}
\Gamma_{122}(\xi)&=& \sum\limits_{i= 1}^{2} \sum\limits_{j= 1}^{2}\sum\limits_{k= 1}^{2} \bar{B_{1}}^{i}\bar{B_{2}}^{j}\bar{B_{2}}^{k}\Gamma_{ijk}(\theta) \\
 &=& \bar{B_{1}}^{1}\bar{B_{2}}^{1}\bar{B_{2}}^{1}\Gamma_{111}(\theta)+\bar{B_{1}}^{1}\bar{B_{2}}^{1}\bar{B_{2}}^{2}\Gamma_{112}(\theta) +\nonumber\\
&&\bar{B_{1}}^{1}\bar{B_{2}}^{2}\bar{B_{2}}^{1}\Gamma_{121}(\theta)+\bar{B_{1}}^{1}\bar{B_{2}}^{2}\bar{B_{2}}^{2}\Gamma_{122}(\theta) +\nonumber \\
&&\bar{B_{1}}^{2}\bar{B_{2}}^{1}\bar{B_{2}}^{1}\Gamma_{211}(\theta)+\bar{B_{1}}^{2}\bar{B_{2}}^{1}\bar{B_{2}}^{2}\Gamma_{212}(\theta) +\nonumber\\
&&\bar{B_{1}}^{2}\bar{B_{2}}^{2}\bar{B_{2}}^{1}\Gamma_{221}(\theta)+\bar{B_{1}}^{2}\bar{B_{2}}^{2}\bar{B_{2}}^{2}\Gamma_{222}(\theta) \\
 &=&  \bar{B_{1}}^{2}\bar{B_{2}}^{2}\bar{B_{2}}^{2}\Gamma_{222}(\theta) \\
 \Gamma_{122}(\xi)&=& \frac{3\mu}{2\sigma^{6}}
\end{eqnarray*}

\begin{eqnarray*}
\Gamma_{222}(\xi)&=& \sum\limits_{i= 1}^{2} \sum\limits_{j= 1}^{2}\sum\limits_{k= 1}^{2} \bar{B_{2}}^{i}\bar{B_{2}}^{j}\bar{B_{2}}^{k}\Gamma_{ijk}(\theta) \\
  &=& \bar{B_{2}}^{1}\bar{B_{2}}^{1}\bar{B_{2}}^{1}\Gamma_{111}(\theta)+\bar{B_{2}}^{1}\bar{B_{2}}^{1}\bar{B_{2}}^{2}\Gamma_{112}(\theta) +\nonumber\\
&&\bar{B_{2}}^{1}\bar{B_{2}}^{2}\bar{B_{2}}^{1}\Gamma_{121}(\theta)+\bar{B_{2}}^{1}\bar{B_{2}}^{2}\bar{B_{2}}^{2}\Gamma_{122}(\theta) +\nonumber \\
&&\bar{B_{2}}^{2}\bar{B_{2}}^{1}\bar{B_{2}}^{1}\Gamma_{211}(\theta)+\bar{B_{2}}^{2}\bar{B_{2}}^{1}\bar{B_{2}}^{2}\Gamma_{212}(\theta) +\nonumber\\
&&\bar{B_{2}}^{2}\bar{B_{2}}^{2}\bar{B_{2}}^{1}\Gamma_{221}(\theta)+\bar{B_{2}}^{2}\bar{B_{2}}^{2}\bar{B_{2}}^{2}\Gamma_{222}(\theta) \\
 &=&\bar{B_{2}}^{2}\bar{B_{2}}^{2}\bar{B_{2}}^{1}\Gamma_{221}(\theta)+ \bar{B_{2}}^{2}\bar{B_{2}}^{1}\bar{B_{2}}^{2}\Gamma_{212}(\theta)+ \bar{B_{2}}^{2}\bar{B_{2}}^{2}\bar{B_{2}}^{2}\Gamma_{222}(\theta) \\
&=& \bar{B_{2}}^{2}\bar{B_{2}}^{2}\bar{B_{2}}^{2}\Gamma_{222}(\theta)\\
\Gamma_{222}(\xi) &=& -\frac{3}{4\sigma^{6}}
\end{eqnarray*}
Using \ref{e6}, we have the result.
\end{proof}

\begin{theorem}
 Let $\xi = \left(\mu, \mu^2 + \sigma^2\right)$ be
  the dual coordinate system associated
   with the Gaussian statistical manifold.
    Then, the components of the Riemann curvature
     tensor  of the affine connection $\nabla^{(1)}$ in these coordinates include:
\begin{eqnarray*}R_{1111}(\xi) &=& R_{2222}(\xi)= R_{1122}(\xi)= R_{1112}(\xi)= R_{2121}(\xi)= R_{2211}(\xi)=0\\
&& R_{2221}(\xi)= R_{2212}(\xi)= R_{2122}(\xi)=
R_{1121}(\xi)=R_{1211}(\xi) =0\\
R_{1221}(\xi)&=&\frac{\mu}{\sigma^{6}},\;
R_{2112}(\xi)=\frac{6\sigma^{8} + 3\mu\sigma^{6} + 6
\mu^{2}\sigma^{6}+ 6 \mu^{2}}{4 \sigma^{14}},\;
R_{1212}(\xi) =\frac{1}{2\sigma^{6}}\\
R_{2111}(\xi) &=&\frac{-2\sigma^{8} + 2 \mu^{2}\sigma^{6} + 46
\mu^{2}\sigma^{8} + 24 \mu^{4}\sigma^{6} + 12 \mu^{2}\sigma^{2} -
4\mu \sigma^{2} + 6\sigma^{10} + 12\mu^{4} - 3\mu^{2} + 9 \mu
\sigma^{8} + 18 \mu^{3}\sigma^{6}}{2\sigma^{14}}
\end{eqnarray*}
and the scalar curve is given by
\begin{eqnarray*}\mathcal{K}&=& \frac{16\mu^{2}\sigma^{8} + 12\mu^{4} + 6\sigma^{10} - \mu\sigma^{8} - 2 \mu^{2}\sigma^{2} +10\mu^{3}\sigma^{6} +92 \mu^{3}\sigma^{8} +48\mu^{5}\sigma^{6}}{4\sigma^{10}}\\
&& + \frac{12\mu\sigma^{10}+24\mu^{5}-6\mu^{3}}{4\sigma^{10}}
\end{eqnarray*}
\end{theorem}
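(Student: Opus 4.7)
The strategy is to combine the Christoffel coefficients in $\xi$-coordinates obtained in Theorem \ref{b50} with the metric $g_{\alpha\beta}(\xi)$ and its inverse from Theorem \ref{b5}, and substitute them into the curvature formula \eqref{e8}; the scalar curvature then follows from the single contraction \eqref{def1}.

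First I would raise the last index of the Christoffel symbols to form $\Gamma^{s}_{\beta\gamma}(\xi) = g^{sm}(\xi)\,\Gamma_{\beta\gamma m}(\xi)$, read off directly from the matrix $G_{d}^{-1}$ and the $\Gamma_{\beta\gamma m}(\xi)$ listed in the proof of Theorem \ref{b50}. To evaluate the partial derivatives $\partial_{\xi_{\alpha}}\Gamma^{s}_{\beta\gamma}$, I would apply the chain rule dictated by the inverse Jacobian $\bar{\mathcal{B}}^{i}_{\alpha}$, since all quantities are most conveniently handled as rational functions of $(\mu,\sigma)$ and must be differentiated through this change of variables.

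Then each of the sixteen components $R_{\alpha\beta\gamma\eta}(\xi)$ reduces to substitution into \eqref{e8}. The antisymmetry $R_{ijkm}=-R_{jikm}$ inherited directly from the defining formula forces eight components (those with $\alpha=\beta$) to vanish, which accounts for $R_{1111}$, $R_{1112}$, $R_{1121}$, $R_{1122}$, $R_{2211}$, $R_{2212}$, $R_{2221}$, $R_{2222}$. The remaining eight components pair up under the same antisymmetry, leaving only four genuinely independent entries, each computed by adding the ``derivative part'' $(\partial_{i}\Gamma^{s}_{jk}-\partial_{j}\Gamma^{s}_{ik})g_{sm}$ and the ``quadratic part'' $\Gamma_{irm}\Gamma^{r}_{jk}-\Gamma_{jrm}\Gamma^{r}_{ik}$. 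Once those are in hand, the scalar curvature is assembled from
\begin{equation*}
\mathcal{K} = \frac{1}{2}\sum_{\alpha,\beta,\gamma,\eta} R_{\alpha\beta\gamma\eta}(\xi)\, g^{\alpha\eta}(\xi)\, g^{\beta\gamma}(\xi),
\end{equation*}
using the entries of $G_{d}^{-1}$ from Theorem \ref{b5}.

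The main obstacle is entirely algebraic rather than conceptual. The $\Gamma(\xi)$ are rational functions with denominators in $\sigma^{4}$ and $\sigma^{6}$; the chain-rule expansion of $\partial_{\xi_{\alpha}}$ in terms of $\partial_{\mu},\partial_{\sigma}$ creates additional factors of $\mu/\sigma$ and $1/\sigma$, and the bilinear cross terms $\Gamma_{irm}\Gamma^{r}_{jk}$ expand into numerous contributions of differing orders in $\mu$. Collecting everything over the common denominators $\sigma^{14}$ for the components of $R_{\alpha\beta\gamma\eta}(\xi)$ and $4\sigma^{10}$ for $\mathcal{K}$, so as to reach the compact closed forms announced in the statement, is a bookkeeping exercise where error accumulates easily; keeping the derivative contribution and the connection-square contribution separate up to the final combination is the most practical safeguard.
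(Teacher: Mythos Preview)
Your plan diverges from the paper at the very first step, and the divergence propagates. You propose to obtain the mixed symbols by raising the last index of the $\Gamma_{\beta\gamma m}(\xi)$ from Theorem~\ref{b50}, i.e.\ $\Gamma^{s}_{\beta\gamma}(\xi)=g^{sm}(\xi)\Gamma_{\beta\gamma m}(\xi)$. The paper does \emph{not} do this: it computes the upper-index symbols afresh from the Levi--Civita formula
\[
\Gamma_{\alpha\beta}^{\gamma}(\xi)=\tfrac{1}{2}g^{\gamma\lambda}(\xi)\bigl(\partial_{\xi_\alpha}g_{\beta\lambda}+\partial_{\xi_\beta}g_{\gamma\alpha}-\partial_{\xi_\gamma}g_{\alpha\beta}\bigr),
\]
and then feeds these (together with the lowered symbols of Theorem~\ref{b50}) into the curvature expression. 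The two sets of $\Gamma^{\gamma}_{\alpha\beta}$ are genuinely different: for instance, raising the index on the Theorem~\ref{b50} data gives $\Gamma^{1}_{11}=4\mu/\sigma^{2}$, whereas the paper's Levi--Civita computation yields $\Gamma^{1}_{11}=\mu/\sigma^{2}$. With your choice of $\Gamma^{s}_{\beta\gamma}$ you would be computing the curvature of a different connection and would not land on the values stated in the theorem.

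A second, related problem is your reliance on the antisymmetry $R_{\alpha\beta\gamma\eta}=-R_{\beta\alpha\gamma\eta}$ to discard eight components and pair up the rest. That antisymmetry holds for \eqref{e8} when a single connection is used consistently, but the paper's curvature values do not obey it: $R_{1221}=\mu/\sigma^{6}$ while $R_{2121}=0$, and $R_{2111}\neq 0$ while $R_{1211}=0$. So the reduction to ``four genuinely independent entries'' that you describe would miss precisely the nonzero components $R_{2111}$ and $R_{1221}$ that drive the scalar curvature. To reproduce the paper's values you must compute each of the sixteen components individually, using the Levi--Civita $\Gamma^{\gamma}_{\alpha\beta}(\xi)$ alongside the lowered $\Gamma_{\alpha\beta\gamma}(\xi)$ of Theorem~\ref{b50}, exactly as the paper does.
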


\begin{proof}
According to Amari \cite{Shun}, we have
\begin{equation*}
    \Gamma_{\alpha \beta}^{\gamma}(\xi) = \frac{1}{2}g^{\gamma
\lambda}(\xi)\left(\frac{\partial g_{\beta \lambda}(\xi)}{\partial
\xi_{\alpha}} + \frac{\partial g_{\gamma \alpha}(\xi)}{\partial
\xi_{\beta}}- \frac{\partial g_{\alpha \beta}(\xi)}{\partial
\xi_{\gamma}}\right)
\end{equation*} with $\alpha,\; \beta,\;\gamma,\;\lambda\in\left\{1,2\right\}$.  So, we have
the coefficients following
\begin{eqnarray*}
\Gamma_{11}^{1}(\xi)&=& \frac{1}{2}g^{11}(\xi)\left(\frac{\partial
g_{11}(\xi)}{\partial \xi_{1}} + \frac{\partial
g_{11}(\xi)}{\partial \xi_{1}}
 - \frac{\partial g_{11}(\xi)}{\partial \xi_{1}}\right)+ \frac{1}{2}g^{12}(\xi)\frac{\partial g_{12}(\xi)}{\partial \xi_{1}}\\
\Gamma_{11}^{1}(\xi) &=& \frac{\mu}{\sigma^{2}}.
\end{eqnarray*}

\begin{eqnarray*}
\Gamma_{22}^{2}(\xi)&=& \frac{1}{2}g^{21}(\xi)\frac{\partial g_{21}(\xi)}{\partial \xi_{2}} + \frac{1}{2}g^{22}(\xi)\frac{\partial g_{22}(\xi)}{\partial \xi_{2}}\\
\Gamma_{22}^{2}(\xi) &=& 0.
\end{eqnarray*}

\begin{eqnarray*}
\Gamma_{12}^{2}(\xi)&=& \frac{1}{2}g^{21}(\xi)\left(\frac{\partial
g_{21}(\xi)}{\partial \xi_{1}} + \frac{\partial
g_{21}(\xi)}{\partial \xi_{2}} - \frac{\partial
g_{12}(\xi)}{\partial \xi_{2}}\right)+
\frac{1}{2}g^{22}(\xi)\left(\frac{\partial g_{22}(\xi)}{\partial
\xi_{1}}
+ \frac{\partial g_{21}(\xi)}{\partial \xi_{2}} - \frac{\partial g_{12}(\xi)}{\partial \xi_{2}}\right)\\
\Gamma_{12}^{2}(\xi) &=& -\frac{\mu}{\sigma^{2}}.
\end{eqnarray*}

\begin{eqnarray*}
\Gamma_{21}^{2}(\xi)&=& \frac{1}{2}g^{21}(\xi)\left(\frac{\partial
g_{11}(\xi)}{\partial \xi_{1}} + \frac{\partial
g_{22}(\xi)}{\partial \xi_{1}}
 - \frac{\partial g_{21}(\xi)}{\partial \xi_{2}}\right)+ \frac{1}{2}g^{22}(\xi)\left(\frac{\partial g_{12}(\xi)}{\partial \xi_{2}}
 + \frac{\partial g_{22}(\xi)}{\partial \xi_{1}} - \frac{\partial g_{21}(\xi)}{\partial \xi_{2}}\right)\\
\Gamma_{21}^{2}(\xi) &=& \frac{\mu}{\sigma^{2}}.
\end{eqnarray*}

\begin{eqnarray*}
\Gamma_{11}^{2}(\xi)&=& \frac{1}{2}g^{21}(\xi)\left(\frac{\partial
g_{11}(\xi)}{\partial \xi_{1}} + \frac{\partial
g_{21}(\xi)}{\partial \xi_{1}}
 - \frac{\partial g_{11}(\xi)}{\partial \xi_{2}}\right)+ \frac{1}{2}g^{22}(\xi)\left(\frac{\partial g_{12}(\xi)}{\partial \xi_{1}}
 + \frac{\partial g_{21}(\xi)}{\partial \xi_{1}} - \frac{\partial g_{11}(\xi)}{\partial \xi_{2}}\right)\\
\Gamma_{11}^{2}(\xi) &=& \frac{-2\mu^{2} + \mu - 3\sigma^{8} -
6\mu^{2}\sigma^{6}}{\sigma^{8}}.
\end{eqnarray*}

\begin{eqnarray*}
\Gamma_{21}^{1}(\xi)&=& \frac{1}{2}g^{11}(\xi)\left(\frac{\partial
g_{11}(\xi)}{\partial \xi_{2}} + \frac{\partial
g_{12}(\xi)}{\partial \xi_{1}}
 - \frac{\partial g_{21}(\xi)}{\partial \xi_{1}}\right) + \frac{1}{2}g^{12}(\xi)\left(\frac{\partial g_{12}(\xi)}{\partial \xi_{2}}
  + \frac{\partial g_{12}}{\partial \xi_{1}} - \frac{\partial g_{21}(\xi)}{\partial \xi_{1}}\right)\\
\Gamma_{21}^{1}(\xi) &=& \frac{1}{2\sigma^{2}}.
\end{eqnarray*}

\begin{eqnarray*}
\Gamma_{22}^{1}(\xi)&=& \frac{1}{2}g^{11}(\xi)\left(\frac{\partial
g_{21}(\xi)}{\partial \xi_{2}}
 + \frac{\partial g_{12}(\xi)}{\partial \xi_{2}}
 - \frac{\partial g_{22}(\xi)}{\partial \xi_{1}}\right) + \frac{1}{2}g^{12}(\xi)\left(\frac{\partial g_{22}(\xi)}{\partial \xi_{2}}
  + \frac{\partial g_{12}(\xi)}{\partial \xi_{2}} - \frac{\partial g_{22}(\xi)}{\partial \xi_{1}}\right)\\
\Gamma_{22}^{1}(\xi) &=& 0.
\end{eqnarray*}
\begin{eqnarray*}
\Gamma_{12}^{1}(\xi)&=& \frac{1}{2}g^{11}(\xi)\left(\frac{\partial
g_{21}(\xi)}{\partial \xi_{1}} + \frac{\partial
g_{11}(\xi)}{\partial \xi_{2}} -
 \frac{\partial g_{12}(\xi)}{\partial \xi_{1}}\right) + \frac{1}{2}g^{12}(\xi)\left(\frac{\partial g_{22}(\xi)}{\partial \xi_{1}} + \frac{\partial g_{11}(\xi)}{\partial \xi_{2}} - \frac{\partial g_{1'2'}}{\partial \xi^{1}}\right)\\
\Gamma_{12}^{1}(\xi) &=& 8\mu^{2}\sigma^{4} + \frac{4\mu +
1}{2\sigma^{2}}.
\end{eqnarray*}

According to Amari  \cite{Shun}, the tensor of Riemanienne curvature
of Christoffel is given by
\begin{eqnarray}R_{\alpha \beta \gamma \eta} & =& <
R(\partial_{\xi_{\alpha}}, \partial_{\xi_{\beta}},
\partial_{\xi_{\gamma}}),
\partial_{\xi_{\eta}}> \label{a}\end{eqnarray}

we have
\begin{eqnarray*}
R(\partial_{\xi_{\alpha}}, \partial_{\xi_{\beta}},
\partial_{\xi_{\gamma}})
&=&\left(\nabla_{\partial_{\xi_{\alpha}}}\nabla_{\partial_{\xi_{\beta}}}-\nabla_{\partial_{\xi_{\beta}}}\nabla_{\partial_{\xi_{\alpha}}}\right)
\partial_{\xi_{\alpha}} - \nabla_{(\partial_{\xi_{\alpha}}\partial_{\xi_{\beta}}- \partial_{\xi_{\beta}}\partial_{\xi_{\alpha}})}\partial_{\xi_{\gamma}}\\
&=&
\left(\nabla_{\partial_{\xi_{\alpha}}}\nabla_{\partial_{\xi_{\beta}}}(\partial_{\xi_{\gamma}})
-
\nabla_{\partial_{\xi_{\beta}}}\nabla_{\partial_{\xi_{\alpha}}}(\partial_{\xi_{\gamma}})\right)-
\nabla_{\partial_{\xi_{\alpha}}\partial_{\xi_{\beta}}}(\partial_{\xi_{\gamma}})+
\nabla_{\partial_{\xi_{\beta}}\partial_{\xi_{\alpha}}}(\partial_{\xi_{\gamma}})\\
&=&
\left(\nabla_{\partial_{\xi_{\alpha}}}\Gamma_{\beta\gamma}^{s'}\partial_{\xi_{s'}}
- \nabla_{\partial_{\xi_{\beta}}}\Gamma_{\alpha
\gamma}^{s'}\partial_{\xi_{s'}}\right) -
\partial_{\alpha}\nabla_{\partial_{\xi_{\beta}}}(\partial_{\xi_{\gamma}})+
\partial_{\xi_{\beta}}\nabla_{\partial_{\xi_{\alpha}}}(\partial_{\xi_{\gamma}})\\
&=&
\left(\partial_{\xi_{\alpha}}\Gamma_{\beta\gamma}^{s'}\partial_{\xi_{s'}}
+ \Gamma_{\beta\gamma
}^{s'}\nabla_{\partial_{\xi_{\alpha}}}\partial_{\xi_{s'}}\right)-
\left(
\partial_{\xi_{\beta}}\Gamma_{\alpha\gamma}^{s'}\partial_{\xi_{s'}} +
\Gamma_{\alpha\gamma
}^{s'}\nabla_{\partial_{\xi_{\beta}}}\partial_{\xi_{s'}}\right)\\
&& -
\partial_{\xi_{\alpha}}\nabla_{\partial_{\xi_{\beta}}}(\partial_{\xi_{\gamma}}) +
\partial_{\xi_{\beta}}\nabla_{\partial_{\xi_{\alpha}}}(\partial_{\xi_{\gamma}})\\
&=&
\left(\partial_{\xi_{\alpha}}\Gamma_{\beta\gamma}^{s'}\partial_{\xi_{s'}}
+ \Gamma_{\beta\gamma
}^{s'}\nabla_{\partial_{\xi_{\alpha}}}\partial_{\xi_{s'}}\right)-
\left(
\partial_{\xi_{\beta}}\Gamma_{\alpha\gamma}^{s'}\partial_{\xi_{s'}} +
\Gamma_{\alpha\gamma
}^{s'}\nabla_{\partial_{\xi_{\beta}}}\partial_{\xi_{s'}}\right)\\
&& -
\partial_{\xi_{\alpha}}\Gamma_{\beta \gamma}^{r'}\partial_{\xi_{r'}} +
\partial_{\xi_{\beta}}\Gamma_{\alpha\gamma}^{r'}\partial_{\xi_{r'}}
\end{eqnarray*}

While replacing in (\ref{a}) we obtain

\begin{eqnarray*}R_{\alpha \beta \gamma \eta} &=& \left(
\partial_{\xi_{\alpha}}
\Gamma^{s'}_{\beta\gamma}-
\partial_{\xi_{\beta}}\Gamma^{s'}_{\alpha \gamma}\right)g_{s'\eta} -
\left( \Gamma^{r'}_{\beta\gamma} \Gamma_{\alpha r' \eta} -
\Gamma^{r'}_{\alpha \gamma}\Gamma_{\beta r' \eta}\right)\nonumber\\
&&+ <\Gamma^{s'}_{\beta
\gamma}\nabla_{\partial_{\xi_{\alpha}}}\partial_{\xi_{s'}},
\partial_{\xi_{\eta}}> - <\Gamma^{s'}_{\alpha
\gamma}\nabla_{\partial_{\xi_{\beta}}}\partial_{\xi_{s'}},
\partial_{\xi_{\eta}}>
\end{eqnarray*}
with $\alpha, \beta, \gamma, \eta, s'\in\left\{1,2\right\}$ so,

\begin{eqnarray*}
R_{1111}(\xi)&=& \left(\partial_{\xi_{1}}\Gamma_{11}^{1}(\xi) - \partial_{\xi_{1}}\Gamma_{11}^{1}(\xi)\right)g_{11}(\xi)+ \left(\partial_{\xi_{1}}\Gamma_{11}^{2}(\xi) - \partial_{\xi_{1}}\Gamma_{11}^{2}(\xi)\right)g_{21}(\xi)\\
&& - \left(\Gamma_{111}(\xi)\Gamma_{11}^{1}(\xi) - \Gamma_{111}(\xi)\Gamma_{11}^{1}(\xi)\right)- \left(\Gamma_{121}(\xi)\Gamma_{11}^{2}(\xi) - \Gamma_{121}(\xi)\Gamma_{11}^{2}(\xi)\right) \\
&&+ <\Gamma_{11}^{1}(\xi)\nabla_{\partial_{\xi_{1}}}(\partial_{\xi_{1}}), \partial_{\xi_{1}}>  + <\Gamma_{11}^{2}(\xi)\nabla_{\partial_{\xi_{1}}}(\partial_{\xi_{2}}), \partial_{\xi_{1}}>\\
&& -<\Gamma_{11}^{1}(\xi)\nabla_{\partial_{\xi_{1}}}(\partial_{\xi_{1}}), \partial_{\xi_{1}}> -<\Gamma_{11}^{2}\nabla_{\partial_{\xi_{2}}}(\partial_{\xi_{1}}), \partial_{\xi_{1}}> \\
&=& 0
\end{eqnarray*}

\begin{eqnarray*}
R_{2222}(\xi)&=& \left(\partial_{\xi_{2}}\Gamma_{22}^{1}(\xi) - \partial_{\xi_{2}}\Gamma_{22}^{1}(\xi)\right)g_{12}(\xi)+ \left(\partial_{\xi_{2}}\Gamma_{22}^{2}(\xi) - \partial_{\xi_{2}}\Gamma_{22}^{2}(\xi)\right)g_{22}(\xi)  \\
&&- \left(\Gamma_{222}(\xi)\Gamma_{22}^{1}(\xi) - \Gamma_{222}(\xi)\Gamma_{22}^{1}(\xi)\right)- \left(\Gamma_{222}(\xi)\Gamma_{22}^{2}(\xi) - \Gamma_{222}(\xi)\Gamma_{22}^{2}(\xi)\right)\\
&&+ <\Gamma_{22}^{1}(\xi)\nabla_{\partial_{\xi_{2}}}(\partial_{\xi_{1}}), \partial_{\xi_{2}}> + <\Gamma_{22}^{2}(\xi)\nabla_{\partial_{\xi_{2}}}(\partial_{\xi_{2}}), \partial_{\xi_{2}}> \\
&& -<\Gamma_{22}^{1}(\xi)\nabla_{\partial_{\xi_{2}}}(\partial_{\xi_{1}}),\partial_{\xi_{2}}> -<\Gamma_{22}^{2}(\xi)\nabla_{\partial_{\xi_{2}}}(\partial_{\xi_{2}}),\partial_{\xi_{2}}>\\
 &=& 0
\end{eqnarray*}

\begin{eqnarray*}
R_{1122}(\xi)&=& \left(\partial_{\xi_{1}}\Gamma_{12}^{1}(\xi) - \partial_{\xi_{1}}\Gamma_{12}^{1}(\xi)\right)g_{12}(\xi)+ \left(\partial_{\xi_{1}}\Gamma_{12}^{2}(\xi) - \partial_{1'}\Gamma_{12}^{2}(\xi)\right)g_{22}(\xi) \\
&&- \left(\Gamma_{112}(\xi)\Gamma_{12}^{1}(\xi) - \Gamma_{112}(\theta)\Gamma_{12}^{1}(\xi)\right)- \left(\Gamma_{122}(\xi)\Gamma_{12}^{2}(\xi) - \Gamma_{122}(\xi)\Gamma_{12}^{2}(\xi)\right) \\
&&+ <\Gamma_{12}^{1}(\xi)\nabla_{\partial_{\xi_{1}}}(\partial_{\xi_{1}}), \partial_{\xi_{2}}> + <\Gamma_{12}^{2}(\xi)\nabla_{\partial_{\xi_{1}}}(\partial_{\xi_{2}}), \partial_{\xi_{2}}>  \\
&&-<\Gamma_{12}^{1}(\xi)\nabla_{\partial_{\xi_{2}}}(\partial_{\xi_{1}}), \partial_{\xi_{2}}> - <\Gamma_{12}^{2}(\xi)\nabla_{\partial_{\xi_{2}}}(\partial_{\xi_{2}}),\partial_{\xi_{2}}>\\
 &=&  0.
\end{eqnarray*}

\begin{eqnarray*}
R_{1112}(\xi)&=& \left(\partial_{\xi_{1}}\Gamma_{11}^{1}(\xi) - \partial_{\xi_{1}}\Gamma_{11}^{1}(\xi)\right)g_{12}(\xi)+ \left(\partial_{\xi_{1}}\Gamma_{11}^{2}(\xi) - \partial_{1'}\Gamma_{11}^{2}(\xi)\right)g_{22}(\xi)\\
&& - \left(\Gamma_{112}(\xi)\Gamma_{11}^{1}(\xi) - \Gamma_{112}(\xi)\Gamma_{11}^{1}(\xi)\right)- \left(\Gamma_{122}(\xi)\Gamma_{11}^{2}(\xi) - \Gamma_{122}(\xi)\Gamma_{11}^{2}(\xi)\right) \\
&&+ <\Gamma_{11}^{1}(\xi)\nabla_{\partial_{\xi_{1}}}(\partial_{\xi_{1}}), \partial_{\xi_{2}}> + <\Gamma_{11}^{2}(\xi)\nabla_{\partial_{\xi_{1}}}(\partial_{\xi_{2}}), \partial_{\xi_{2}}> \\
&&-<\Gamma_{11}^{1}(\xi)\nabla_{\partial_{\xi_{1}}}(\partial_{\xi_{1}}), \partial_{\xi_{2}}> -<\Gamma_{11}^{2}(\xi)\nabla_{\partial_{\xi_{1}}}(\partial_{\xi_{2}}), \partial_{\xi_{2}}> \\
 &=&0
\end{eqnarray*}

\begin{eqnarray*}
R_{2121}(\xi)&=& \left(\partial_{2'}\Gamma_{12}^{1}(\xi) - \partial_{\xi_{1}}\Gamma_{22}^{1}(\xi)\right)g_{11}(\xi)+ \left(\partial_{\xi_{2}}\Gamma_{12}^{2}(\xi) - \partial_{\xi_{1}}\Gamma_{22}^{2}(\xi)\right)g_{21}(\xi)  \\
&&- \left(\Gamma_{211}(\xi)\Gamma_{12}^{1}(\xi) - \Gamma_{111}(\xi)\Gamma_{22}^{1}(\xi)\right)- \left(\Gamma_{221}(\xi)\Gamma_{12}^{2}(\xi) - \Gamma_{121}(\xi)\Gamma_{22}^{2}(\xi)\right)\\
&&+ <\Gamma_{12}^{1}(\xi)\nabla_{\partial_{\xi_{2}}}(\partial_{\xi_{1}}), \partial_{\xi_{1}}> + <\Gamma_{12}^{2}(\xi)\nabla_{\partial_{\xi_{2}}}(\partial_{\xi_{2}}), \partial_{\xi_{1}}> \\
&&- <\Gamma_{22}^{1}(\xi)\nabla_{\partial_{\xi_{1}}}(\partial_{\xi_{1}}), \partial_{\xi_{1}}> -<\Gamma_{22}^{2}(\xi)\nabla_{\partial_{\xi_{1}}}(\partial_{\xi_{2}}), \partial_{\xi_{1}}> \\
&=& 0
\end{eqnarray*}

\begin{eqnarray*}
R_{2211}(\xi) &=& \left(\partial_{\xi_{2}}\Gamma_{21}^{1}(\xi) - \partial_{\xi_{2}}\Gamma_{21}^{1}(\xi)\right)g_{11}(\xi)+ \left(\partial_{\xi_{2}}\Gamma_{21}^{2}(\xi) - \partial_{\xi_{2}}\Gamma_{21}^{2}(\xi)\right)g_{21}(\xi) \\
&&- \left(\Gamma_{211}(\xi)\Gamma_{21}^{1}(\xi) - \Gamma_{211}\Gamma_{21}^{1}(\xi)\right)- \left(\Gamma_{221}(\xi)\Gamma_{21}^{2}(\xi) - \Gamma_{221}(\xi)\Gamma_{21}^{2}(\xi)\right)\\
&& + \Gamma_{211}(\xi)\Gamma_{21}^{1}(\xi) + \Gamma_{221}(\xi)\Gamma_{21}^{2}(\xi) - \Gamma_{211}(\xi)\Gamma_{21}^{1}(\xi) - \Gamma_{221}(\xi)\Gamma_{21}^{2}(\xi)\\
&=& 0.
\end{eqnarray*}

\begin{eqnarray*}
R_{2221}(\xi) &=& \left(\partial_{\xi_{2}}\Gamma_{22}^{1}(\xi) - \partial_{\xi_{2}}\Gamma_{22}^{1}(\xi)\right)g_{11}(\xi)+ \left(\partial_{\xi_{2}}\Gamma_{22}^{2}(\xi) - \partial_{\xi_{2}}\Gamma_{22}^{2}(\xi)\right)g_{21}(\xi) \\
&&- \left(\Gamma_{211}(\xi)\Gamma_{22}^{1}(\xi) - \Gamma_{211}(\xi)\Gamma_{22}^{1}(\xi)\right)- \left(\Gamma_{211}(\xi)\Gamma_{22}^{1}(\xi) - \Gamma_{211}(\xi)\Gamma_{22}^{1}(\xi)\right)\\
&&+  \Gamma_{211}(\xi)\Gamma_{22}^{1}(\xi) + \Gamma_{221}\Gamma_{22}^{2}(\xi) - \Gamma_{211}(\xi)\Gamma_{22}^{1}(\xi) - \Gamma_{221}(\xi)\Gamma_{22}^{2}(\xi)\\
&=& 0.
\end{eqnarray*}

\begin{eqnarray*}
R_{2212}(\xi) &=& \left(\partial_{\xi_{2}}\Gamma_{21}^{1}(\xi) - \partial_{\xi_{2}}\Gamma_{21}^{1}(\xi)\right)g_{12}(\xi)+ \left(\partial_{\xi_{2}}\Gamma_{21}^{2}(\xi) - \partial_{\xi_{2}}\Gamma_{21}^{2}(\xi)\right)g_{22}(\xi) \\
&&- \left(\Gamma_{212}(\xi)\Gamma_{21}^{1}(\xi) - \Gamma_{212}(\xi)\Gamma_{21}^{1}(\xi)\right)- \left(\Gamma_{222}\Gamma_{21}^{2}(\xi) - \Gamma_{222}(\xi)\Gamma_{21}^{2}(\xi)\right)\\
&&+ \Gamma_{211}\Gamma_{21}^{1}(\xi) + \Gamma_{221}(\xi)\Gamma_{21}^{2}(\xi) - \Gamma_{211}\Gamma_{21}^{1}(\xi) - \Gamma_{221}(\xi)\Gamma_{21}^{2}(\xi)\\
&=& 0.
\end{eqnarray*}

\begin{eqnarray*}
R_{2122}(\xi) &=& \left(\partial_{\xi_{2}}\Gamma_{12}^{1}(\xi) - \partial_{\xi_{1}}\Gamma_{22}^{1}(\xi)\right)g_{12}(\xi)+ \left(\partial_{\xi_{2}}\Gamma_{12}^{2}(\xi) - \partial_{\xi_{1}}\Gamma_{22}^{2}(\xi)\right)g_{22}(\xi)\\
&& - \left(\Gamma_{212}(\xi)\Gamma_{12}^{1}(\xi) - \Gamma_{112}(\xi)\Gamma_{22}^{1}(\xi)\right)- \left(\Gamma_{222}(\xi)\Gamma_{21}^{2}(\xi) - \Gamma_{122}(\xi)\Gamma_{22}^{2}(\xi)\right)\\
&& + \Gamma_{212}(\xi)\Gamma_{12}^{1}(\xi) + \Gamma_{222}(\xi)\Gamma_{21}^{2}(\xi) - \Gamma_{211}\Gamma_{22}^{1}(\xi) - \Gamma_{122}(\xi)\Gamma_{22}^{2}(\xi)\\
&=& 0.
\end{eqnarray*}

\begin{eqnarray*}
R_{1221}(\xi) &=& \left(\partial_{\xi_{1}}\Gamma_{22}^{1}(\xi) - \partial_{\xi_{2}}\Gamma_{12}^{1}(\xi)\right)g_{11}(\xi)+ \left(\partial_{\xi_{1}}\Gamma_{22}^{2}(\xi) - \partial_{\xi_{2}}\Gamma_{12}^{2}(\xi)\right)g_{21}(\xi) \\
&&-  \left(\Gamma_{111}\Gamma_{22}^{1}(\xi) - \Gamma_{211}(\xi)\Gamma_{21}^{1}(\xi)\right)- \left(\Gamma_{121}(\xi)\Gamma_{22}^{2}(\xi) - \Gamma_{221}(\xi)\Gamma_{12}^{2}(\xi)\right)\\
&& + \Gamma_{111}(\xi)\Gamma_{21}^{1}(\xi) + \Gamma_{121}(\xi)\Gamma_{21}^{2}(\xi) - \Gamma_{211}(\xi)\Gamma_{12}^{1}(\xi) - \Gamma_{221}\Gamma_{12}^{2}(\xi)\\
&=& \Gamma_{111}(\xi)\Gamma_{21}^{1}(\xi) + \Gamma_{121}(\xi)\Gamma_{21}^{2}(\xi) \\
&=& \frac{\mu}{\sigma^{6}}.
\end{eqnarray*}

\begin{eqnarray*}
R_{2112}(\xi) &=& \left(\partial_{\xi_{2}}\Gamma_{11}^{1}(\xi) - \partial_{\xi_{1}}\Gamma_{21}^{1}(\xi)\right)g_{12}(\xi)+ \left(\partial_{\xi_{2}}\Gamma_{11}^{2}(\xi) - \partial_{\xi_{1}}\Gamma_{21}^{2}(\xi)\right)g_{22}(\xi) \\
&&-  \left(\Gamma_{212}(\xi)\Gamma_{21}^{1}(\xi) - \Gamma_{112}(\xi)\Gamma_{21}^{1}(\xi)\right)- \left(\Gamma_{222}(\xi)\Gamma_{21}^{2}(\xi) - \Gamma_{222}(\xi)\Gamma_{21}^{2}(\xi)\right)\\
&& + \Gamma_{212}(\xi)\Gamma_{11}^{1}(\xi) + \Gamma_{222}(\xi)\Gamma_{11}^{2}(\xi) - \Gamma_{112}(\xi)\Gamma_{21}^{1}(\xi) - \Gamma_{122}(\xi)\Gamma_{21}^{2}(\xi)\\
&=& \frac{6\sigma^{8} + 3\mu\sigma^{6} + 6 \mu^{2}\sigma^{6}+ 6
\mu^{2}}{4 \sigma^{14}}.
\end{eqnarray*}

\begin{eqnarray*}
R_{1212}(\xi)&=& \left(\partial_{\xi_{1}}\Gamma_{21}^{1}(\xi) - \partial_{\xi_{2}}\Gamma_{11}^{1}(\xi)\right)g_{12}(\xi)+ \left(\partial_{\xi_{1}}\Gamma_{12}^{2}(\xi) - \partial_{\xi_{2}}\Gamma_{11}^{2}(\xi)\right)g_{22}(\xi) \\
&&- \left(\Gamma_{112}(\xi)\Gamma_{21}^{1}(\xi) - \Gamma_{212}(\xi)\Gamma_{11}^{1}(\xi)\right)- \left(\Gamma_{122}(\xi)\Gamma_{21}^{2}(\xi) - \Gamma_{222}(\xi)\Gamma_{11}^{2}(\xi)\right) \\
&&+ <\Gamma_{21}^{1}(\xi)\nabla_{\partial_{\xi_{1}}}(\partial_{\xi_{1}}), \partial_{\xi_{2}}> + <\Gamma_{21}^{2}(\xi)\nabla_{\partial_{\xi_{1}}}(\partial_{\xi_{2}}), \partial_{\xi_{2}}>  \\
&&- <\Gamma_{11}^{1}(\xi)\nabla_{\partial_{\xi_{1}}}(\partial_{\xi_{1}}), \partial_{\xi_{2}}> -<\Gamma_{11}^{2}\nabla_{\partial_{\xi_{2}}}(\partial_{\xi_{2}}), \partial_{\xi_{2}}>\\
&&= \left(\partial_{\xi_{1}}\Gamma_{21}^{1}(\xi) - \partial_{\xi_{2}}\Gamma_{11}^{1}\right)g_{12}(\xi)+ \left(\partial_{\xi_{1}}\Gamma_{12}^{2}(\xi) - \partial_{\xi_{2}}\Gamma_{11}^{2}(\xi)\right)g_{22}(\xi) \\
&&- \left(\Gamma_{112}(\xi)\Gamma_{21}^{1}(\xi) - \Gamma_{212}\Gamma_{11}^{1}\right)- \left(\Gamma_{122}\Gamma_{21}^{2} - \Gamma_{222}(\xi)\Gamma_{11}^{2}(\xi)\right)\\
&& + \Gamma_{112}(\xi)\Gamma_{21}^{1}(\xi) + \Gamma_{122}(\xi)\Gamma_{21}^{2}(\xi) - \Gamma_{212}(\xi)\Gamma_{11}^{1}(\xi) - \Gamma_{222}(\xi)\Gamma_{11}^{2}(\xi)\\
&=& \frac{1}{2\sigma^{6}}.
\end{eqnarray*}

\begin{eqnarray*}
R_{1211}(\xi)&=& \left(\partial_{\xi_{1}}\Gamma_{21}^{1}(\xi) - \partial_{\xi_{2}}\Gamma_{11}^{1}(\xi)\right)g_{11}(\xi)+ \left(\partial_{\xi_{2}}\Gamma_{21}^{2}(\xi) - \partial_{\xi_{2}}\Gamma_{11}^{2}(\xi)\right)g_{21}(\xi)  \\
&&- \left(\Gamma_{111}(\xi)\Gamma_{21}^{1}(\xi) - \Gamma_{211}(\xi)\Gamma_{11}^{1}(\xi)\right)- \left(\Gamma_{121}\Gamma_{21}^{2}(\xi) - \Gamma_{221}(\xi)\Gamma_{11}^{2}(\xi)\right)\\
&&+ \Gamma_{111}(\xi)\Gamma_{21}^{1}(\xi) + \Gamma_{121}(\xi)\Gamma_{21}^{2}(\xi) - \Gamma_{221}(\xi)\Gamma_{11}^{2}(\xi) - \Gamma_{211}(\xi)\Gamma_{11}^{1}(\xi)\\
&=& 0.
\end{eqnarray*}

\begin{eqnarray*}
R_{1121}(\xi)&=& \left(\partial_{\xi_{1}}\Gamma_{12}^{1}(\xi) - \partial_{\xi_{1}}\Gamma_{12}^{1}(\xi)\right)g_{11}(\xi)+ \left(\partial_{\xi_{1}}\Gamma_{12}^{2}(\xi) - \partial_{\xi_{1}}\Gamma_{12}^{2}(\xi)\right)g_{21}(\xi)\\
&& - \left(\Gamma_{111}(\xi)\Gamma_{12}^{1}(\xi) - \Gamma_{111}(\xi)\Gamma_{12}^{1}(\xi)\right)- \left(\Gamma_{121}\Gamma_{12}^{2}(\xi) - \Gamma_{121}(\xi)\Gamma_{12}^{2}(\xi)\right)\\
&& + \Gamma_{111}(\xi)\Gamma_{12}^{1}(\xi) + \Gamma_{121}(\xi)\Gamma_{12}^{2}(\xi) - \Gamma_{111}(\xi)\Gamma_{12}^{1}(\xi) - \Gamma_{121}(\xi)\Gamma_{12}^{2}(\xi)\\
&=& 0.
\end{eqnarray*}

\begin{eqnarray*}
R_{2111}(\xi)&=& \left(\partial_{\xi_{2}}\Gamma_{11}^{1}(\xi) - \partial_{\xi_{1}}\Gamma_{11}^{1}(\xi)\right)g_{11}(\xi)+ \left(\partial_{\xi_{2}}\Gamma_{11}^{2}(\xi) - \partial_{\xi_{1}}\Gamma_{11}^{2}(\xi)\right)g_{21}(\xi) \\
&&- \left(\Gamma_{211}(\xi)\Gamma_{11}^{1}(\xi) - \Gamma_{112}(\xi)\Gamma_{21}^{1}(\xi)\right)- \left(\Gamma_{221}(\xi)\Gamma_{11}^{2}(\xi) - \Gamma_{122}(\xi)\Gamma_{21}^{2}(\xi)\right) \\
&&+ <\Gamma_{11}^{1}(\xi)\nabla_{\partial_{\xi_{1}}}(\partial_{\xi_{1}}), \partial_{\xi_{1}}> + <\Gamma_{11}^{2}(\xi)\nabla_{\partial_{\xi_{1}}}(\partial_{\xi_{2}}), \partial_{\xi_{1}}> \\
&&-<\Gamma_{21}^{1}(\xi)\nabla_{\partial_{\xi_{1}}}(\partial_{\xi_{1}}), \partial_{\xi_{2}}> -<\Gamma_{21}^{2}(\xi)\nabla_{\partial_{\xi_{1}}}(\partial_{\xi_{2}}), \partial_{\xi_{2}}> \\
&=& \left(\partial_{2'}\Gamma_{11}^{1}(\xi) - \partial_{\xi_{1}}\Gamma_{11}^{1}(\xi)\right)g_{11}(\xi)+ \left(\partial_{\xi_{2}}\Gamma_{11}^{2}(\xi) - \partial_{\xi_{1}}\Gamma_{11}^{2}(\xi)\right)g_{21}(\xi) \\
&&- \left(\Gamma_{211}\Gamma_{11}^{1}(\xi) - \Gamma_{112}\Gamma_{21}^{1}(\xi)\right)- \left(\Gamma_{221}(\xi)\Gamma_{11}^{2}(\xi) - \Gamma_{122}(\xi)\Gamma_{21}^{2}(\xi)\right)\\
&& + \Gamma_{11}^{1}(\xi)\Gamma_{111}(\xi) + \Gamma_{11}^{1}(\xi)\Gamma_{121}(\xi) -\Gamma_{21}^{1}(\xi) - \Gamma_{21}^{2}(\xi)\Gamma_{122}(\xi)\\
&=& \frac{-2\sigma^{8} + 2 \mu^{2}\sigma^{6} + 46 \mu^{2}\sigma^{8}
+ 24 \mu^{4}\sigma^{6} + 12 \mu^{2}\sigma^{2} - 4\mu \sigma^{2} +
6\sigma^{10} + 12\mu^{4} - 3\mu^{2} + 9 \mu \sigma^{8} + 18
\mu^{3}\sigma^{6}}{2\sigma^{14}}
\end{eqnarray*}

Using the definition \ref{def1}, we have
 \begin{eqnarray*} \mathcal{K}&=&
\frac{1}{2}R_{1111}(\xi)g^{11}(\xi)g^{11}(\xi) +
\frac{1}{2}R_{1112}(\xi)g^{12}(\xi)g^{11}(\xi) +
\frac{1}{2}R_{1121}(\xi)g^{11}(\xi)g^{12}(\xi)\\
&&+\frac{1}{2}R_{1122}(\xi)g^{12}(\xi)g^{12}(\xi) + \frac{1}{2}R_{1211}(\xi)g^{11}(\xi)g^{21}(\xi) + \frac{1}{2}R_{1212}(\xi)g^{12}(\xi)g^{21}(\xi)\\
&& + \frac{1}{2}R_{1221}(\xi)g^{11}(\xi)g^{22}(\xi) + \frac{1}{2}R_{1222}(\xi)g^{12}(\xi)g^{22}(\xi) + \frac{1}{2}R_{2111}(\xi)g^{21}(\xi)g^{11}(\xi) \\
&&+ \frac{1}{2}R_{2112}(\xi)g^{22}(\xi)g^{11}(\xi) + \frac{1}{2}R_{2121}(\xi)g^{21}(\xi)g^{12}(\xi) + \frac{1}{2}R_{2122}(\xi)g^{22}(\xi)g^{12}(\xi) \\
&&+ \frac{1}{2}R_{2211}(\xi)g^{21}(\xi)g^{21}(\xi) + \frac{1}{2}R_{2212}(\xi)g^{22}g^{21}(\xi) + \frac{1}{2}R_{2221}(\xi)g^{21}(\xi)g^{22}(\xi)\\
&& + \frac{1}{2}R_{2222}(\xi)g^{22}(\xi)g^{22}(\xi)\\
&=&   \frac{1}{2}R_{1212}(\xi)g^{12}(\xi)g^{21}(\xi) + \frac{1}{2}R_{2111}(\xi)g^{21}(\xi)g^{11}(\xi) + \frac{1}{2}R_{2112}(\xi)g^{22}(\xi)g^{11}(\xi)\\
&=&  \frac{16\mu^{2}\sigma^{12} + 12\mu^{4}\sigma^{4} + 6\sigma^{14} - \mu\sigma^{12} - 2 \mu^{2}\sigma^{6} +10\mu^{3}\sigma^{10} +92 \mu^{3}\sigma^{12} +48\mu^{5}\sigma^{10}}{4\sigma^{14}}\\
&&
+\frac{12\mu\sigma^{14}+24\mu^{5}\sigma^{4}-6\mu^{3}\sigma^{4}}{4\sigma^{14}}
\end{eqnarray*}

Thus the curve will be given by

\begin{eqnarray*}\mathcal{K}&=& \frac{16\mu^{2}\sigma^{8} + 12\mu^{4} + 6\sigma^{10} - \mu\sigma^{8} - 2 \mu^{2}\sigma^{2} +10\mu^{3}\sigma^{6} +92 \mu^{3}\sigma^{8} +48\mu^{5}\sigma^{6}}{4\sigma^{10}}\\
&& + \frac{12\mu\sigma^{10}+24\mu^{5}-6\mu^{3}}{4\sigma^{10}}
\end{eqnarray*}

\end{proof}

\section{General conclusion}\label{sec6}

This study has examined the differential geometric structure of the
Gaussian statistical model when expressed in a dual coordinate
system $\xi = (\mu, \mu^2 + \sigma^2)$. Through explicit
computations of the Fisher information metric, the affine
connection, and the curvature tensors, we have shown that the dual
parametrization introduces non-trivial geometric effects. In
contrast to the classical coordinate system $\theta = (\mu,
\sigma)$, where the manifold is known to be both flat and
torsion-free, the $\xi$-system exhibits non zero torsion and scalar
curvature. These findings highlight an important phenomenon in
information geometry: duality does not guarantee invariance of
geometric quantities such as curvature and torsion. In particular,
our results show that the choice of coordinate system can deeply
influence the global geometric interpretation of a statistical
model, even for well understood cases like the normal distribution.
From a broader perspective, this work invites further exploration
into the stability and variability of geometric invariants under
coordinate transformations. Understanding which geometric properties
are intrinsic to the model and which depend on parametrization
remains a central question. Future research could aim to classify
transformations that preserve curvature or to extend the analysis to
other statistical families, such as the beta, Weibull, or log-normal
distributions. Ultimately, this study reinforces the idea that the
geometry of statistical models is not only local and computational
it is also deeply sensitive to the way we represent information.

\backmatter

\bmhead{Supplementary information} This manuscript has no additional
data.

\bmhead{Acknowledgments} We would like to thank all the active
members of the algebra and geometry research group at the University
of Maroua in Cameroon.

\section*{Declarations}
This article has no conflict of interest to the journal. No
financing with a third party.
\begin{itemize}
\item No Funding
\item No Conflict of interest/Competing interests (check journal-specific guidelines for which heading to use)
\item  Ethics approval
\item  Consent to participate
\item  Consent for publication
\item  Availability of data and materials
\item  Code availability
\item Authors' contributions
\end{itemize}
\bibliography{bibliography}% common bib file
%%% if required, the content of .bbl file can be included here once bbl is generated
%%%\input sn-article.bbl

\end{document}